\def\beq{\begin{equation}}
\def\eeq{\end{equation}}
\def\baq{\begin{eqnarray}}
\def\eaq{\end{eqnarray}}
\def\baqn{\begin{eqnarray*}}
\def\eaqn{\end{eqnarray*}}
\newcommand{\R}{\mathbb{R}}
\newcommand{\ball}{\mathbb{B}}
\begin{document}

\title { Lur'e dynamical systems with state-dependent set-valued feedback 
}


\author{          Ba Khiet Le 
}

\institute{ Instituto de Ciencias de la Ingenier\'ia  \at
             Universidad de O'Higgins, Rancagua, Chile
\\
\email{lebakhiet@gmail.com} 
}
\date{Received: date / Accepted: date}

\maketitle

\begin{abstract}
Using a new implicit discretization scheme,  we study   in  this paper the existence and uniqueness of strong solutions  for a class of Lur'e  dynamical systems  where the set-valued feedback depends  on both time and  state. This work is  a generalization of \cite{abc} where the time-dependent set-valued feedback is considered to acquire only weak solutions. Obviously, strong solutions and implicit discretization scheme are nice properties, especially for numerical simulation. We also provide some conditions such that the solutions are exponentially attractive. The obtained results can be used to study the time-varying Lur'e systems with errors in data. Our result is new even the  set-valued feedback depends only on the time.
 
 \keywords{ Lur'e dynamical systems \and well-posedness \and state-dependent \and set-valued \and normal cone.}
\end{abstract}

\section{Introduction}

\indent  It is known that  Lur'e  dynamical systems have been studied intensively recently with  many applications can be found in   control theory, engineering and  applied mathematics (see, e.g., \cite{l} for a survey).  In general,  the systems consist of  a smooth ordinary differential equation  $\dot{x}=g(x,\lambda)$ with output $y=h(x,\lambda)$ and a static  single-valued feedback $\lambda=F(y)$. In order to describe  discontinuous changes of velocity more effectively, Lur'e systems with static  set-valued feedback  was firstly considered  in \cite{bg} with a special case and then largely analyzed  in \cite{Acary,abc0,abc,ahl,ahl2,al,bg2,bg1,brogliato,cs}.
 Let us also mention that  set-valued  Lur'e  systems can be recast into  other non-smooth mathematical models  \cite{bdla,bg2,Cojocaru,Grabowski,Gwinner,Gwinner1} such as complementarity systems, evolution
variational inequalities, projected  systems,  relay systems \ldots

 In this paper,  we study  the well-posedness for a class of  Lur'e  dynamical systems where the set-valued feedback has the form of normal cone to a moving closed, convex set which depends not only on the time but also on  the state. For more details, let be given  a function $f: [0,+\infty) \times \R^n \to  \R^n $, some  matrices $ B:  \R^m\to  \R^n ,C: \R^n\to  \R^m$, $D : \R^m\to  \R^m$ and a set-valued mapping $K: [0,+\infty) \times \R^n \rightrightarrows \R^m$.   Then we want to find an absolutely continuous function $x(\cdot): [0,+\infty) \to \R^n$ with given initial point $x_0\in \R^n$ such that
\begin{subequations}
\label{eq:tot}
\begin{empheq}[left={({\mathcal S})}\empheqlbrace]{align}
  & \dot{x}(t) = f(t,x(t))+B\lambda(t)\; {\rm a.e.} \; t \in [0,+\infty); \label{1a}\\
  & y(t)=Cx(t)+D\lambda(t),\\
  & \lambda(t) \in -N_{K(t,x(t))}(y(t)), \;t\ge 0;\\
  & x(0) = x_0,
\end{empheq}
\end{subequations}
where  $\lambda, y : [0,+\infty)\to \R^m$ are two unknown connected  mappings.  

In \cite{abc}, the authors studied the time-dependent case $K(t,x)\equiv K(t)$ with motivation for considering the viability control problem and  the  output regulation problem, particularly  in power converters. To the best of our knowledge, it can be considered as the first work which considers  non-static  set-valued feedbacks with non-zero $D$ (see, e.g., \cite{ahl,ahl2,al,bg,bg2,bg1,brogliato,cs} for  static cases). The state-dependent case is left as an open problem. A kind of {weak} solution (\cite[Theorem 3.1]{abc}) is obtained under the following assumptions  :\\

$(A1)$ The matrix $D$ is positive semidefinite, and there exists a symmetric positive
definite matrix $P$ such that ${\rm ker}(D+D^T)\subset {\rm ker}(PB-C^T)$;\\

$(A2)$ There exists a nonnegative locally essentially bounded function \\$\rho: [0,+\infty) \to [0,+\infty)$ such that
$$
\Vert f(t,x)-f(t,y) \Vert \le \rho(t) \Vert x-y \Vert, \;x, y\in \R^n;
$$

$(A3)$ For each $t \geq  0$, ${\rm rge}(C) \cap  {\rm rint(rge}(N^{-1}_{K(t)} + D)) \not = \emptyset $;\\

$(A4)$ For every $t \geq  0$ and each $v \in  {\rm rge}(C) \cap  {\rm rge}(N^{-1}_{K(t)} + D)$, it holds that ${\rm rge}(D +
D^T ) \cap  (N^{-1}_{C(t)} + D)^{-1}(v) \not = \emptyset $;\\

$(A5)$ It holds that ${{\rm rge}(D) \subseteq  {\rm rge}(C)}$ and $K  : [0,\infty ) \rightrightarrows  \R^m$ has closed and convex values
for each $t \geq  0.$ Also, the mapping $K \cap {\rm rge}(C)$ varies in an absolutely continuous
manner with time; that is, there exists a locally absolutely continuous function
$\mu  : [0,\infty ) \rightarrow  \R^+$ such that
$$
d_H(K(t_1)\cap {\rm rge}(C), K(t_2)\cap {\rm rge}(C)) \le \vert \mu(t_1)-\mu(t_2) \vert, \;\;t_1, t_2 \ge 0,
$$
where $d_H$ denotes the Hausdorff distance. 
 \begin{figure}[h!]
\begin{center}
\includegraphics[scale=0.5]{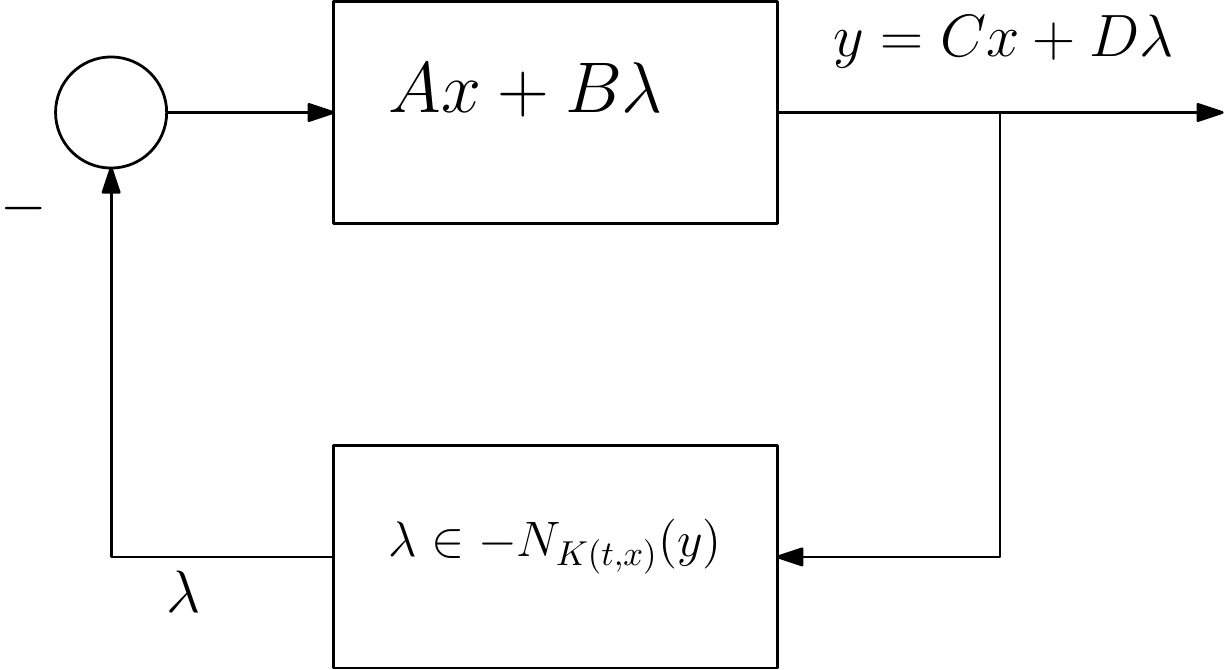}
\caption{ Lur'e  systems with state-dependent  feedback.}
\end{center}
\label{luref}
\end{figure}
The system $({\mathcal S})$ was rewritten into a time-varying first order differential inclusion where the right-hand side can be decomposed as a maximal monotone operator and a single-valued Lipschitz function to obtain weak solutions. 

The current paper generalizes   \cite{abc}  not only to the state-dependent moving set $K(t,x)$ but also to obtain strong solutions by using a new  implicit discretization scheme. Obviously,  strong solutions and the implicit discretization scheme  are desired properties which are advantages  for implementation in numerical simulations. In addition, we provide some conditions such that the solutions are exponentially attractive, i. e., the solutions converges to the origin with an exponential rate when the time is large. The obtained results can be used to study  time-varying Lur'e dynamical systems with errors in data.

The paper is organized as follows. In Section \ref{section2}, we recall   some notation and useful fundamental results. The well-posedness  and asymptotic behaviour of $(\mathcal{S})$  are  analyzed thoroughly in Section \ref{section3}. Application for the study of time-varying Lur'e dynamical systems with errors in data is presented in Section \ref{s4}. Some concluding remarks are given in Section \ref{sectionc}. 
\section{ Notation and Mathematical Backgrounds}
\label{section2}
Let us first introduce some notation that will be used in the sequel. Denote by $\langle\cdot,\cdot\rangle$ , $\|\cdot\|, \ball$ the  scalar product, the corresponding norm and the closed unit ball in Euclidean spaces. Let be given a closed, convex set $K\subset \R^n$. The distance and the projection from a point $s$ to $K$ are defined respectively by 
$${ d}(s,K):=\inf_{x\in K} \|s-x\|, \;\;{\rm proj}(s;K):=x \in K \;\;{\rm such \;that \;} { d}(s,K)= \|s-x\|.$$
The minimal norm element of $K$ is  defined by 
$$
K^0:= {\rm proj}(0; K).
$$
The Hausdorff distance between two closed, convex sets $K_1, K_2$ is given by 
$$d_H(K_1,K_2):=\max\{\sup_{x_1\in K_1} d(x_1,K_2), \sup_{x_2\in K_2} d(x_2,K_1)\}.$$
We define the $\it{indicator\; function}$ $i_K(\cdot)$   as follows
$$
i_K(x):=\left\{\begin{array}{lll}
0&{\rm if} &x\in K, \\ \\
+\infty&{\rm if} &x\notin K.
\end{array}\right.\;\;\;\;\;\;
$$
The $\it{normal\; cone}$ of a closed convex set $K$ is given by 
$$
N_K(x):=\partial i_K(x)=\{x^*\in H: \langle x^*, y-x \rangle \le 0,\;\forall y\in K\}.
$$

\begin{definition}
\noindent  A matrix $P\in \R^{n\times n}$  is called
 \begin{itemize}
\item  $positive$ $semidefinite$ if for all $x\in {\R^n},$ we have 
$$\langle Px,x \rangle \ge 0;$$
\item $positive$ $definite$ if there exists $\alpha>0$ such that for all $x\in {\R^n},$ we have 
$$
\langle Px,x \rangle \ge \alpha\|x\|^2;
$$
\item $symmetric$ if $P=P^T$, i.e., for all $x,y\in {\R^n},$ we have 
$$\langle Px,y\rangle=\langle   x,Py\rangle.$$
\end{itemize}
\end{definition}

We have the following fundamental lemma.
\begin{lemma}\label{estiD}
Let $D$ be a positive semidefinite  matrix. Then there exists some constant $c_1>0$ such that for all $x\in  {\rm rge}(D+D^T)$, we have:
\beq \label{constantd}
\langle Dx, x \rangle \ge c_1 \Vert x \Vert^2.
\eeq
\end{lemma}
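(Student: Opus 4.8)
The plan is to reduce the quadratic form associated with $D$ to one associated with its symmetric part, and then invoke the spectral theorem. The starting observation is that the skew-symmetric part of $D$ contributes nothing to the quadratic form: since $\langle D^T x, x\rangle = \langle x, Dx\rangle = \langle Dx, x\rangle$ for real vectors, one has
$$
\langle Dx, x\rangle = \tfrac12 \langle (D+D^T)x, x\rangle \qquad \text{for all } x\in\R^n.
$$
Hence it suffices to bound from below the quadratic form of the symmetric matrix $S := \tfrac12(D+D^T)$, which is positive semidefinite because $\langle Sx, x\rangle = \langle Dx, x\rangle \ge 0$ by hypothesis. Note also that ${\rm rge}(S) = {\rm rge}(D+D^T)$, so working on ${\rm rge}(S)$ is exactly what the statement requires.

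Next I would apply the spectral theorem to $S$: being real symmetric, it admits an orthonormal basis of eigenvectors $v_1,\dots,v_n$ with real eigenvalues, all nonnegative by positive semidefiniteness. Let $\lambda_{\min}^+ > 0$ denote the smallest strictly positive eigenvalue of $S$ (if $S=0$ the claim is vacuous, since then ${\rm rge}(D+D^T)=\{0\}$ and any $c_1>0$ works on the single point $x=0$). The key structural fact, valid precisely because $S$ is symmetric, is that ${\rm rge}(S) = ({\rm ker}\,S)^\perp$ coincides with the span of the eigenvectors whose eigenvalues are strictly positive.

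Finally, taking any $x \in {\rm rge}(S)$ and writing $x = \sum_i \alpha_i v_i$ over the eigenvectors with positive eigenvalues $\lambda_i \ge \lambda_{\min}^+$, I would compute
$$
\langle Sx, x\rangle = \sum_i \lambda_i \alpha_i^2 \;\ge\; \lambda_{\min}^+ \sum_i \alpha_i^2 \;=\; \lambda_{\min}^+ \Vert x\Vert^2.
$$
Combining this with the first step yields $\langle Dx, x\rangle \ge \lambda_{\min}^+ \Vert x\Vert^2$ for all $x\in {\rm rge}(D+D^T)$, so the claim holds with $c_1 := \lambda_{\min}^+$.

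I do not anticipate a serious obstacle here; the only point requiring genuine care is the identification ${\rm rge}(D+D^T) = ({\rm ker}(D+D^T))^\perp$, which is where the symmetry of $S$ is essential — it fails for general matrices — together with the verification that restricting to this range excludes the zero eigenvalue, so that the lower bound is strictly positive rather than merely nonnegative.
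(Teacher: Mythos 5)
Your proof is correct and is essentially the argument the paper has in mind: the paper in fact states this lemma without proof, remarking only that $c_1$ can be chosen as the smallest positive eigenvalue of $D+D^T$, which is precisely the reduction to the symmetric part plus the spectral theorem that you spell out in detail. One point in your favor: the correct constant is the smallest positive eigenvalue of $\tfrac12(D+D^T)$ (your $\lambda_{\min}^{+}$), so the paper's remark as literally stated is off by a factor of $2$ (take $D=I$ on $\R$: then $\langle Dx,x\rangle=\Vert x\Vert^2$, while the smallest positive eigenvalue of $D+D^T$ is $2$).
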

\begin{remark}
Indeed, $c_1$ can be chosen as the small positive eigenvalue of $D+D^T$ if $D+D^T\neq 0$.
\end{remark}
Let be given some matrices $A\in \R^{n\times n}, B\in\R^{m\times n}, C\in \R^{m\times n} $ and $D\in \R^{m\times m}$.
\begin{definition} \label{passi} The system $(A,B,C,D)$ is $passive$ if there exists a symmetric positive definite matrix $P\in \R^{n\times n})$ such that  for all $x\in \R^n, y\in \R^m$, we have
\beq\label{passivebeq}
\langle PAx, x \rangle+ \langle (PB-C^T) y, x \rangle- \langle Dy,y \rangle \le 0,
\eeq
or equivalently, the matrix 
$$-\left( \begin{array}{cc}
 PA+A^TP& PB-C^T\\ \\
B^TP-C & -(D+D^T)
\end{array} \right)$$
is positive semidefinite. 
\end{definition}
We  provide a characterization for passive systems, see also \cite{cs} for another characterization.
\begin{lemma}\label{semi}
The matrix $D$ is positive semidefinite  and ${\rm ker}(D+D^T)\subset {\rm rge}(PB-C^T)$ for some symmetric positive definite matrix $P$ if and only if the system $(kI,B,C,D)$ is passive for some $k\in \R$.
\end{lemma}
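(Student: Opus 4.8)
The plan is to rewrite passivity of $(kI,B,C,D)$ as a single matrix inequality and then decide when it is solvable. Putting $A=kI$ in Definition~\ref{passi} and using $P=P^T$ (so that $PA+A^TP=2kP$ and $(PB-C^T)^T=B^TP-C$), the system $(kI,B,C,D)$ is passive exactly when there is a symmetric positive definite $P$ with
\[
\mathcal{M}(k,P):=\begin{pmatrix} -2kP & -(PB-C^T)\\ -(B^TP-C) & D+D^T\end{pmatrix}\succeq 0 .
\]
Throughout I read the hypothesis of the lemma as $\mathrm{ker}(D+D^T)\subseteq \mathrm{ker}(PB-C^T)$, which is the only dimensionally consistent inclusion (here $PB-C^T:\R^m\to\R^n$, so both kernels live in $\R^m$) and which agrees with $(A1)$. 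I abbreviate $Q:=D+D^T$ and $M:=PB-C^T$.

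For the implication from passivity I would simply read off information from $\mathcal{M}(k,P)\succeq 0$. Its lower--right diagonal block must be positive semidefinite, giving $Q\succeq 0$, i.e. $D$ is positive semidefinite. For the kernel inclusion I would use the elementary fact that a symmetric positive semidefinite block matrix forces the kernel of its $(2,2)$ block into the kernel of the off--diagonal block: for $v\in\mathrm{ker}\,Q$ and arbitrary $x$, evaluating the quadratic form of $\mathcal{M}(k,P)$ at $(x,sv)$ yields $-2k\langle Px,x\rangle-2s\langle Mv,x\rangle$, which is affine in $s$ and nonnegative for all $s\in\R$, so its slope $\langle Mv,x\rangle$ vanishes; as $x$ is arbitrary this gives $Mv=0$. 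Hence $\mathrm{ker}(D+D^T)\subseteq\mathrm{ker}(PB-C^T)$.

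For the converse I would keep the $P$ from the hypothesis, choose $k<0$ so that the block $-2kP$ is positive definite, and take the Schur complement of $\mathcal{M}(k,P)$ with respect to this invertible block. Since $(-2kP)^{-1}=-\tfrac{1}{2k}P^{-1}$, that complement equals $Q-t\,S$ with $t:=-\tfrac{1}{2k}>0$ and $S:=M^TP^{-1}M\succeq 0$, so $\mathcal{M}(k,P)\succeq 0$ is equivalent to $Q-tS\succeq 0$. Because $P^{-1}\succ 0$ one has $\mathrm{ker}\,S=\mathrm{ker}\,M$, and the hypothesis $\mathrm{ker}\,Q\subseteq\mathrm{ker}\,M$ gives $\mathrm{ker}\,Q\subseteq\mathrm{ker}\,S$; it then remains to make $k$ negative enough (equivalently $t$ small enough) that $Q-tS\succeq 0$.

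The step I expect to carry the real content is this last perturbation claim: if $Q,S$ are symmetric positive semidefinite with $\mathrm{ker}\,Q\subseteq\mathrm{ker}\,S$, then $Q-tS\succeq 0$ for all small $t>0$. I would prove it by splitting $\R^m=\mathrm{ker}\,Q\oplus\mathrm{rge}(Q)$ orthogonally. The inclusion $\mathrm{ker}\,Q\subseteq\mathrm{ker}\,S$ forces $\mathrm{rge}(S)=(\mathrm{ker}\,S)^\perp\subseteq(\mathrm{ker}\,Q)^\perp=\mathrm{rge}(Q)$ and $S|_{\mathrm{ker}\,Q}=0$, so both $Q$ and $S$ respect the decomposition with no cross terms and both vanish on $\mathrm{ker}\,Q$. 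On $\mathrm{rge}(Q)$ the form $Q$ is positive definite: Lemma~\ref{estiD} gives $\langle Qx,x\rangle=2\langle Dx,x\rangle\ge 2c_1\|x\|^2$ there, while $\langle Sx,x\rangle\le\beta\|x\|^2$ with $\beta$ the largest eigenvalue of $S$. Thus for $x\in\mathrm{rge}(Q)$ we get $\langle(Q-tS)x,x\rangle\ge(2c_1-t\beta)\|x\|^2\ge 0$ as soon as $t\le 2c_1/\beta$, and the form is identically zero on $\mathrm{ker}\,Q$, so $Q-tS\succeq 0$. Choosing $k=-1/(2t)$ for such a $t$ makes $(kI,B,C,D)$ passive and closes the equivalence. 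The only delicate bookkeeping is tracking which blocks are symmetric and checking that the cross terms drop out, which is precisely what $\mathrm{ker}\,Q\subseteq\mathrm{ker}\,S$ provides.
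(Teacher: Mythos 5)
Your proof is correct, and you also correctly repaired the statement: the hypothesis must read $\ker(D+D^T)\subseteq\ker(PB-C^T)$ (as in $(A1)$, Assumption 2, and the paper's own proof of this lemma); the ``rge'' in the statement is a typo, as your dimension count shows. Your route genuinely differs from the paper's in two ways. First, the paper proves only the implication ``kernel inclusion $\Rightarrow$ passivity'' and simply cites \cite{cs} (Proposition 3) and \cite{ahl} (Lemma 1) for the converse, whereas you prove both directions; your converse argument --- evaluating the quadratic form of $\mathcal{M}(k,P)$ at $(x,sv)$ with $v\in\ker(D+D^T)$ and forcing the resulting affine function of $s$ to be nonnegative for all $s\in\R$ --- is a clean, self-contained substitute for that citation. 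Second, for the direction the paper does prove, both arguments rest on taking $k$ sufficiently negative, but the packaging differs: the paper verifies the scalar inequality of Definition \ref{passi} directly, writing $y=y^{im}+y^{ker}$, using the kernel inclusion to replace $y$ by $y^{im}$ in the cross term $\langle(PB-C^T)y,x\rangle$, and closing with Young's inequality under the explicit choice $2\sqrt{(-k)\alpha c_1}\ge\|PB-C^T\|$, where $\alpha$ is the coercivity constant of $P$ and $c_1$ comes from Lemma \ref{estiD}; you instead take the Schur complement of $\mathcal{M}(k,P)$ with respect to the block $-2kP\succ 0$ and reduce everything to the perturbation claim $Q-tS\succeq 0$ for small $t>0$, proved via the orthogonal splitting $\R^m=\ker Q\oplus\mathrm{rge}(Q)$. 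Your version is more structural and buys an exact description of the admissible $k$ (passivity with this $P$ holds iff $Q-tS\succeq 0$ with $t=-1/(2k)$), at the price of invoking the Schur-complement criterion; the paper's is a shorter one-shot estimate once its constants are set up. The only cosmetic point: your threshold $t\le 2c_1/\beta$ tacitly assumes $\beta=\lambda_{\max}(S)>0$; when $S=M^TP^{-1}M=0$ (i.e.\ $PB=C^T$) every $t>0$ works, so that degenerate case should be mentioned separately.
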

\begin{proof}
$(\Leftarrow)$ See, e.g., \cite[Proposition 3]{cs} or  \cite[Lemma 1]{ahl}.\\
\noindent $(\Rightarrow)$  Since $D$ is positive semidefinite  there exists some $c_1>0$ such that for all $y\in \R^m$, we have  
$$
\langle Dy,y \rangle =  \langle Dy^{im},y^{im} \rangle\ge  c_1\|y^{im}\|^2,
$$
where $y^{im}$ is the orthogonal projection of $y$ onto ${\rm rge}(D+D^T)$. Similarly, there exists some $\alpha>0$ such that for all $x\in \R^n$
$$
\langle Px,x \rangle \ge \alpha \Vert x \Vert^2.
$$
\noindent We choose $k<0$ satisfying the inequality
\beq
2\sqrt{(-k)\alpha c_1}\ge \|PB-C^T\|.
\eeq
Then for all $x\in \R^n, y\in \R^m$, we obtain
\baqn
&&\langle (P(kI)x, x \rangle+ \langle (PB-C^T) y, x \rangle- \langle Dy,y \rangle\\
&=& k\langle Px, x \rangle+ \langle (PB-C^T) y^{im}, x \rangle- \langle D y^{im},y^{im} \rangle \\
&&\;\;(since\; {\rm ker}(D+D^T)\subset {\rm ker}(PB-C^T))\\
&\le& k\alpha\|x\|^2+\|PB-C^T\| \|x\| \|y_{im}\|-c_1\|y_{im}\|^2\le 0.
\eaqn
Thus $(kI,B,C,D)$ is passive.\\
  \qed
\end{proof}

\begin{definition}
A set-valued mapping $F: \R^n \rightrightarrows \R^n$ is called $\it{monotone}$ if for all $x,y\in \R^n,x^*\in F(x),y^*\in F(y)$, one has $\langle x^*-y^*,x-y\rangle \ge 0.$ In addition, it  is called $\it{maximal \;monotone}$ if there is no monotone  operator $G$ such that the graph of $F$ is contained strictly in  the graph of $G.$
\end{definition}
 \begin{proposition}{\rm (\cite{Aubin-Cellina,Brezis})}\label{Yosida}
Let $H$ be a Hilbert   space, $F:  {H} \rightrightarrows  {H}$ be a maximal monotone operator and let $\lambda>0$. Then\\
$1)$ the \emph{resolvent} of $F$  defined by $J_F^\lambda:=(I+\lambda F)^{-1} $ is a non-expansive and single-valued map from $ {H}$ to $ {H}$.\\
$2)$ the \emph{Yosida approximation} of $F$ defined by $F^\lambda:=\frac{1}{\lambda}(I-J_F^\lambda)=(\lambda I+F^{-1})^{-1}$ satisfies\\
\indent i) for all $x\in {H}$, $F^\lambda(x)\in F(J_F^\lambda x)$ ,\\
\indent ii) $F_\lambda$ is Lipschitz continuous with constant $\frac{1}{\lambda}$ and also maximal monotone.\\
\indent iii) If $x\in {\rm dom}(F)$, then $\|F^\lambda x\| \le \|F^0x\|$, where $F^0x$ is the element of $Fx$ of minimal norm.\\
\end{proposition}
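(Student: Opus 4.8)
The plan is to read everything off the defining equivalence
\[
y=J_F^\lambda(x)\iff \tfrac{1}{\lambda}(x-y)\in F(y),
\]
using monotonicity of $F$ for all the inequalities and invoking maximality at exactly one point. For part $1)$, suppose $y_i$ satisfies $\tfrac{1}{\lambda}(x_i-y_i)\in F(y_i)$, $i=1,2$. Monotonicity gives $\langle (x_1-y_1)-(x_2-y_2),y_1-y_2\rangle\ge 0$, hence
\[
\langle x_1-x_2,y_1-y_2\rangle\ge \|y_1-y_2\|^2,
\]
and Cauchy--Schwarz yields $\|y_1-y_2\|\le\|x_1-x_2\|$; putting $x_1=x_2$ forces $y_1=y_2$. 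Thus $J_F^\lambda$ is single-valued and non-expansive \emph{wherever it is defined}. The substantive point is that it is defined on all of $H$, i.e. ${\rm rge}(I+\lambda F)=H$. This is Minty's theorem and I expect it to be the main obstacle, being the only place where maximality is essential: one fixes $\lambda$ and shows that $S:={\rm rge}(I+\lambda F)$ is nonempty, closed and open in $H$ --- closedness follows because non-expansiveness makes the candidate solutions $\{y_n\}$ Cauchy whenever $\{z_n\}\subset S$ converges, after which maximality forces the limiting pair into the graph of $F$; openness is obtained by a perturbation/contraction argument --- so connectedness of $H$ yields $S=H$.

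For part $2)\,i)$, set $y=J_F^\lambda x$; then by definition $F^\lambda x=\tfrac{1}{\lambda}(x-y)\in F(y)=F(J_F^\lambda x)$. The algebraic identity $F^\lambda=(\lambda I+F^{-1})^{-1}$ is obtained by rewriting $z=F^\lambda x$ as $x=y+\lambda z$ with $z\in F(y)$, equivalently $y\in F^{-1}(z)$, so that $x\in(\lambda I+F^{-1})(z)$ and conversely.

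For part $2)\,ii)$, with $y_i=J_F^\lambda x_i$ the inequality above gives
\[
\|(x_1-x_2)-(y_1-y_2)\|^2=\|x_1-x_2\|^2-2\langle x_1-x_2,y_1-y_2\rangle+\|y_1-y_2\|^2\le \|x_1-x_2\|^2-\|y_1-y_2\|^2,
\]
whence $\|F^\lambda x_1-F^\lambda x_2\|=\tfrac{1}{\lambda}\|(x_1-x_2)-(y_1-y_2)\|\le\tfrac{1}{\lambda}\|x_1-x_2\|$. For monotonicity, write $x_i=y_i+\lambda F^\lambda x_i$ to get
\[
\langle F^\lambda x_1-F^\lambda x_2,x_1-x_2\rangle=\langle F^\lambda x_1-F^\lambda x_2,y_1-y_2\rangle+\lambda\|F^\lambda x_1-F^\lambda x_2\|^2\ge 0,
\]
the first term being nonnegative since $F^\lambda x_i\in F(y_i)$. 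A single-valued, monotone, everywhere-defined and (by the Lipschitz bound) continuous operator is automatically maximal monotone, which finishes $ii)$.

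Finally, for part $2)\,iii)$, let $x\in{\rm dom}(F)$, put $y=J_F^\lambda x$ and $\xi:=F^0x\in F(x)$. Since $F^\lambda x\in F(y)$, monotonicity of $F$ at the pairs $(y,F^\lambda x)$ and $(x,\xi)$ gives $\langle F^\lambda x-\xi,\,y-x\rangle\ge0$; substituting $y-x=-\lambda F^\lambda x$ yields $\|F^\lambda x\|^2\le\langle\xi,F^\lambda x\rangle\le\|\xi\|\,\|F^\lambda x\|$, that is $\|F^\lambda x\|\le\|F^0x\|$.
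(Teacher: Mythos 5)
The paper itself gives no proof of this proposition: it is recalled from \cite{Aubin-Cellina,Brezis} as background, so there is no in-paper argument to compare against; your proposal supplies what the paper omits. Your computational steps are the standard ones and they are correct: the estimate $\langle x_1-x_2,\,y_1-y_2\rangle\ge\|y_1-y_2\|^2$ obtained from monotonicity gives at once single-valuedness and non-expansiveness of $J_F^\lambda$ and, via the firm non-expansiveness identity, the $\frac{1}{\lambda}$-Lipschitz bound for $F^\lambda$; the inclusion $F^\lambda x\in F(J_F^\lambda x)$ and the identity $F^\lambda=(\lambda I+F^{-1})^{-1}$ are read off the defining equivalence; monotonicity of $F^\lambda$ and the bound $\|F^\lambda x\|\le\|F^0x\|$ follow from monotonicity of $F$ exactly as you write. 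The maximality of $F^\lambda$ rests on the standard fact that an everywhere-defined, continuous, monotone operator is maximal monotone; you state this without proof, which is reasonable, though a one-line hemicontinuity argument would make it self-contained.

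The one genuine soft spot is the surjectivity of $I+\lambda F$. You correctly identify it as the crux, but your sketch (range nonempty, closed and open, hence equal to $H$ by connectedness) is not a complete argument: closedness does follow from your Cauchy estimate together with closedness of the graph of a maximal monotone operator, but openness of ${\rm rge}(I+\lambda F)$ for a \emph{set-valued} $F$ is essentially the whole content of Minty's theorem and is not obtained by a routine ``perturbation/contraction argument''; the known proofs go through Brouwer's fixed point theorem (Debrunner--Flor) or a convex-analytic/minimax argument. Fortunately you do not need to prove it here: the paper records Minty's theorem separately as Proposition \ref{minty} (also cited, not proved), and since $\lambda F$ is maximal monotone whenever $F$ is and $\lambda>0$, you should simply invoke that proposition to get ${\rm rge}(I+\lambda F)=H$ rather than re-derive it. With that substitution your proof is complete and follows the classical route of the cited references.
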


\noindent  Let us recall Minty's Theorem in the setting of Hilbert  spaces  (see \cite{Aubin-Cellina,Brezis}).
\begin{proposition}\label{minty}
Let $H$ be a Hilbert   space.  Let $F: H\rightrightarrows H$ be a monotone operator. Then $F$ is maximal monotone if and only if ${\rm rge}(F+I)=H.$
\end{proposition}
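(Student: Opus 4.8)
The plan is to prove the two implications separately; the forward implication (maximality $\Rightarrow$ surjectivity of $F+I$) is by far the harder one, while the converse is a short algebraic computation. For the converse, assume ${\rm rge}(F+I)=H$ and let $(x_0,y_0)$ be monotonically related to the graph of $F$, that is $\langle y_0-v,\,x_0-u\rangle\ge 0$ for all $(u,v)\in{\rm gph}(F)$; by the definition of maximality it suffices to show $y_0\in F(x_0)$. Since $x_0+y_0\in H={\rm rge}(F+I)$, there exists $(u,v)\in{\rm gph}(F)$ with $u+v=x_0+y_0$. Inserting $v=x_0+y_0-u$ into the monotonicity inequality yields $\langle u-x_0,\,x_0-u\rangle\ge 0$, i.e. $\|x_0-u\|^2\le 0$, so $u=x_0$ and hence $v=y_0\in F(x_0)$. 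Thus $F$ is maximal monotone.

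For the forward implication, assume $F$ is maximal monotone; the goal is ${\rm rge}(F+I)=H$. Two structural facts drive the argument. First, for any $\lambda>0$ the operator $I+\lambda F$ is strongly monotone, so $(I+\lambda F)^{-1}$ is single-valued and non-expansive on its domain ${\rm rge}(I+\lambda F)$; indeed $\langle z_1-z_2,\,x_1-x_2\rangle\ge\|x_1-x_2\|^2$ whenever $z_i\in(I+\lambda F)(x_i)$. Second, the graph of a maximal monotone operator is strongly sequentially closed: if $x_n\to x$, $y_n\to y$ with $y_n\in F(x_n)$, then letting $n\to\infty$ in $\langle y_n-v,\,x_n-u\rangle\ge 0$ and invoking maximality gives $y\in F(x)$. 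The engine of the proof is the resolvent identity: solving $x+\lambda F(x)\ni f$ is equivalent to the fixed-point equation $x=(I+\lambda_0 F)^{-1}\!\big(\tfrac{\lambda_0}{\lambda}f+(1-\tfrac{\lambda_0}{\lambda})x\big)$, whose right-hand side is a contraction of modulus $|1-\lambda_0/\lambda|<1$ as soon as $\lambda>\lambda_0/2$. Hence, once ${\rm rge}(I+\lambda_0 F)=H$ holds for a single $\lambda_0>0$ (so that the resolvent above is globally defined), Banach's fixed-point theorem yields ${\rm rge}(I+\lambda F)=H$ for every $\lambda>\lambda_0/2$, and iterating this bootstrap covers all $\lambda>0$, in particular $\lambda=1$. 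Equivalently, one may argue that ${\rm rge}(F+I)$ is nonempty, closed (via the Cauchy property of the non-expansive resolvent together with the closed-graph fact) and open, hence all of $H$ by connectedness.

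The main obstacle is exactly the base case: producing one parameter $\lambda_0>0$ with ${\rm rge}(I+\lambda_0 F)=H$. Everything else is soft --- algebra, non-expansiveness, a Cauchy argument, and the contraction bootstrap. The difficulty is that, unlike the subdifferential case $F=\partial\varphi$ where $x+\lambda_0F(x)\ni f$ is the Euler equation of the coercive minimization $\min_x\{\tfrac12\|x-f\|^2+\lambda_0\varphi(x)\}$ and solvability is immediate, a general maximal monotone $F$ carries no global potential, so pure monotonicity manipulations do not suffice. I would therefore establish the base case with a compactness/topological input: first in finite dimensions via a Brouwer degree (or Brouwer fixed-point) argument applied to a truncation of $F$ together with an a priori bound, and then pass to a general Hilbert space through a Galerkin approximation over finite-dimensional subspaces, extracting weak limits and closing them by the maximal monotonicity of $F$; alternatively the Debrunner--Flor extension lemma exhibits a solution directly. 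This compactness step is where the real content of Minty's theorem lies.
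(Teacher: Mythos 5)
The paper itself offers no proof of this proposition: it is recalled as Minty's classical theorem with a pointer to \cite{Aubin-Cellina,Brezis}, so there is no internal argument to compare against. Judged on its own terms, your proposal is half a proof. The converse implication (${\rm rge}(F+I)=H$ $\Rightarrow$ maximality) is complete and correct: given a monotonically related pair $(x_0,y_0)$, solving $u+v=x_0+y_0$ in the graph and plugging into the monotonicity inequality forces $u=x_0$, $v=y_0$. This is exactly the standard argument and needs no repair.

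The forward implication, however, is not proved; it is a (correct) roadmap. The resolvent identity, the contraction bootstrap in $\lambda$, and the non-expansiveness of $(I+\lambda F)^{-1}$ are all fine, but they are conditional on the base case ${\rm rge}(I+\lambda_0 F)=H$ for some $\lambda_0>0$, and that base case is precisely what you leave unexecuted: you name Brouwer degree plus truncation, a Galerkin passage to the limit, or the Debrunner--Flor lemma, but carry out none of them. As you yourself write, this is ``where the real content of Minty's theorem lies,'' so deferring it means the theorem is not established. There is also a technical trap in the Galerkin route as you describe it: the limits you extract there are \emph{weak}, while the graph-closedness fact you proved is strong--strong. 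Maximal monotone graphs are not weakly--weakly sequentially closed, so ``closing them by the maximal monotonicity of $F$'' requires the $\limsup\langle v_n,u_n\rangle\le\langle v,u\rangle$ device (the Minty trick itself), which must be verified from the Galerkin structure and is not automatic. If instead you are permitted to cite Debrunner--Flor or the finite-dimensional Minty--Browder surjectivity theorem as known results, your argument closes --- but at that point you are doing what the paper does, namely invoking the literature, and the honest conclusion is that your text is a proof sketch with the hard kernel outsourced rather than a self-contained proof.
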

Let be given two maximal monotone operators $F_1$ and
$F_2$, we recall  the definition of  pseudo-distance between  $F_1$ and $F_2$ introduced by Vladimirov \cite{Vladimirov} as follows
$$
{\rm dis}( F_1, F_2):=\sup\Big\{ \frac{\langle \eta_1 -\eta_2,z_2-z_1\rangle}{1+|\eta_1|+|\eta_2|}:  \eta_i\in F(z_i), z_i\in {\rm dom}\; (F_i), i=1,2\Big\}.
$$
\begin{lemma}\label{hauslem}
\cite{Vladimirov} If $ F_i=N_{A_i}$  where $A_i$ is a  closed convex set  $(i=1,2)$ then 
$$
{\rm dis}( F_1,  F_2)=d_H(A_1,A_2).
$$
\end{lemma}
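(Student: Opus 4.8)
The plan is to establish the two inequalities $\mathrm{dis}(F_1,F_2)\le d_H(A_1,A_2)$ and $\mathrm{dis}(F_1,F_2)\ge d_H(A_1,A_2)$ separately, relying throughout on the variational characterizations of the normal cone and of the metric projection. Recall that $\eta_i\in N_{A_i}(z_i)$ means $z_i\in A_i$ and $\langle \eta_i, y-z_i\rangle\le 0$ for every $y\in A_i$, and that $\eta=\mathrm{proj}(p;A)-p$ satisfies $p-\mathrm{proj}(p;A)\in N_A(\mathrm{proj}(p;A))$.

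For the upper bound I would fix arbitrary $z_i\in A_i$ and $\eta_i\in N_{A_i}(z_i)$ and decompose $\langle \eta_1-\eta_2, z_2-z_1\rangle=\langle \eta_1, z_2-z_1\rangle+\langle \eta_2, z_1-z_2\rangle$. Inserting the projections $p:=\mathrm{proj}(z_2;A_1)\in A_1$ and $q:=\mathrm{proj}(z_1;A_2)\in A_2$, the normal cone inequalities $\langle \eta_1, p-z_1\rangle\le 0$ and $\langle \eta_2, q-z_2\rangle\le 0$ together with Cauchy--Schwarz give $\langle \eta_1, z_2-z_1\rangle\le \|\eta_1\|\,d(z_2,A_1)$ and $\langle \eta_2, z_1-z_2\rangle\le \|\eta_2\|\,d(z_1,A_2)$. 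Since both distances are dominated by $d_H(A_1,A_2)$, I obtain $\langle \eta_1-\eta_2,z_2-z_1\rangle\le (\|\eta_1\|+\|\eta_2\|)\,d_H(A_1,A_2)\le (1+\|\eta_1\|+\|\eta_2\|)\,d_H(A_1,A_2)$, and dividing through and taking the supremum yields $\mathrm{dis}(F_1,F_2)\le d_H(A_1,A_2)$.

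For the lower bound, which is the more delicate half, I would proceed constructively. Fix $z_1\in A_1$ and set $z_2:=\mathrm{proj}(z_1;A_2)$, so that $\|z_1-z_2\|=d(z_1,A_2)=:\delta$ and, by the projection characterization, $z_1-z_2\in N_{A_2}(z_2)$. Choosing $\eta_1=0\in N_{A_1}(z_1)$ and, for a scaling parameter $t>0$, $\eta_2=t(z_1-z_2)\in N_{A_2}(z_2)$ (a normal cone is a cone), a direct computation gives $\langle \eta_1-\eta_2, z_2-z_1\rangle=t\,\delta^2$ while $1+\|\eta_1\|+\|\eta_2\|=1+t\,\delta$. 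Hence the corresponding quotient equals $t\delta^2/(1+t\delta)$, which tends to $\delta$ as $t\to+\infty$; taking the supremum over $t$ and then over $z_1\in A_1$ shows $\mathrm{dis}(F_1,F_2)\ge \sup_{x\in A_1}d(x,A_2)$. The completely analogous construction, starting from a point of $A_2$ and projecting onto $A_1$, gives $\mathrm{dis}(F_1,F_2)\ge \sup_{x\in A_2}d(x,A_1)$, so that $\mathrm{dis}(F_1,F_2)\ge d_H(A_1,A_2)$, completing the proof.

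The main obstacle is precisely the normalizing denominator $1+\|\eta_1\|+\|\eta_2\|$ in the definition of the pseudo-distance: an unscaled choice of normal vectors recovers only $\delta^2/(1+\delta)$ rather than $\delta$. The decisive idea is to exploit the cone structure of $N_{A_2}(z_2)$ and let the normal vector grow without bound, so that the additive constant $1$ becomes asymptotically negligible and the quotient saturates exactly at the distance $\delta$.
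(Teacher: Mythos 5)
Your proof is correct. Note that the paper itself gives no proof of this lemma at all: it is quoted directly from Vladimirov's paper, so there is no internal argument to compare against. Your two-sided argument is a complete, self-contained replacement. The upper bound is the standard estimate: inserting the projections $\mathrm{proj}(z_2;A_1)$ and $\mathrm{proj}(z_1;A_2)$ and using the normal-cone inequality plus Cauchy--Schwarz bounds the numerator by $(\Vert\eta_1\Vert+\Vert\eta_2\Vert)\,d_H(A_1,A_2)$, which is dominated by the normalized denominator. The lower bound is where you correctly identified and resolved the only real difficulty: with bounded normal vectors the quotient saturates at $\delta^2/(1+\delta)<\delta$, and your move of taking $\eta_2=t(z_1-z_2)\in N_{A_2}(z_2)$ (legitimate, since the normal cone is a cone and $z_1-\mathrm{proj}(z_1;A_2)$ is indeed normal at the projection) and letting $t\to+\infty$ makes the additive constant $1$ in the denominator negligible, giving $\mathrm{dis}(F_1,F_2)\ge d(z_1,A_2)$ for every $z_1\in A_1$, and symmetrically for $A_2$. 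The argument also degenerates gracefully when $d_H(A_1,A_2)=+\infty$ or when the projection distance is zero, so no case is missing.
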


\begin{lemma}\label{esti2mm} \cite{Kunze}
Let $F_1$, $F_2$ be two maximal monotone operators. For $\lambda>0, \delta >0$ and $x\in {\rm dom}(F_1)$, we have
\baqn
\Vert x - J^\lambda_{F_2}(x)\Vert &\le& \lambda \Vert F_1^0 x \Vert + {\rm dis}(F_1, F_2)+\sqrt{\lambda(1+\Vert F_1^0 x \Vert){\rm dis}(F_1, F_2)}\\
&\le& \lambda\Vert F_1^0 x \Vert + {\rm dis}(F_1, F_2)+ (\delta {\rm dis}(F_1, F_2)+\frac{\lambda(1+\Vert F_1^0 x \Vert)}{4\delta})\\
&\le& \frac{\lambda(1+(4\delta+1)\Vert F_1^0 x \Vert)}{4\delta})+(1+\delta){\rm dis}(F_1, F_2).
\eaqn
\end{lemma}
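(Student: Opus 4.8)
The plan is to exploit the definition of the Vladimirov pseudo-distance by feeding it one well-chosen admissible pair from each graph, and then to reduce everything to a scalar quadratic inequality. Write $y := J^\lambda_{F_2}(x)$ and set $r := \|x - J^\lambda_{F_2}(x)\| = \|x-y\|$, the quantity to be bounded. By definition of the resolvent, $x \in y + \lambda F_2(y)$, so $\tfrac{1}{\lambda}(x - y) \in F_2(y)$; in particular $y \in \mathrm{dom}(F_2)$. Since $x \in \mathrm{dom}(F_1)$ we also have $F_1^0 x \in F_1(x)$. I would therefore apply the definition of $\mathrm{dis}(F_1, F_2)$ to the admissible choice $z_1 = x$, $\eta_1 = F_1^0 x$ and $z_2 = y$, $\eta_2 = \tfrac{1}{\lambda}(x - y)$, which gives
\[
\mathrm{dis}(F_1, F_2) \ge \frac{\big\langle F_1^0 x - \tfrac{1}{\lambda}(x-y),\, y - x\big\rangle}{1 + \|F_1^0 x\| + \tfrac{1}{\lambda}\|x-y\|}.
\]

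Next I would expand the numerator. Using $\langle x-y, y-x\rangle = -\|x-y\|^2$ one gets $\langle F_1^0 x,\, y-x\rangle + \tfrac{1}{\lambda}\|x-y\|^2$, and bounding the first term below by $-\|F_1^0 x\|\,r$ via Cauchy--Schwarz, the numerator is at least $\tfrac{r^2}{\lambda} - \|F_1^0 x\|\,r$. Multiplying through by the (strictly positive) denominator and by $\lambda$, and abbreviating $D := \mathrm{dis}(F_1, F_2)$, $a := \lambda\|F_1^0 x\|$, $b := \lambda(1 + \|F_1^0 x\|)$, this collapses to the scalar quadratic inequality $r^2 - (a + D)\,r - Db \le 0$.

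Since $b > 0$, the left-hand side is an upward parabola in $r$, so $r$ lies below its larger root, $r \le \tfrac{1}{2}\big[(a+D) + \sqrt{(a+D)^2 + 4Db}\big]$. A short computation shows this root is at most $a + D + \sqrt{bD}$: squaring the claim $\sqrt{(a+D)^2 + 4Db} \le (a+D) + 2\sqrt{bD}$ reduces it to $0 \le 4(a+D)\sqrt{bD}$, which is obvious. Hence
\[
r \le \lambda\|F_1^0 x\| + \mathrm{dis}(F_1,F_2) + \sqrt{\lambda(1 + \|F_1^0 x\|)\,\mathrm{dis}(F_1,F_2)},
\]
which is the first asserted inequality.

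The remaining two inequalities are elementary. The second follows from Young's inequality $\sqrt{st} \le \delta s + \tfrac{t}{4\delta}$ (valid for $s,t,\delta>0$, being $(\sqrt{\delta s} - \sqrt{t/4\delta})^2 \ge 0$) applied with $s = \mathrm{dis}(F_1,F_2)$ and $t = \lambda(1 + \|F_1^0 x\|)$; the third is just the regrouping $\lambda\|F_1^0 x\| + \tfrac{\lambda(1 + \|F_1^0 x\|)}{4\delta} = \tfrac{\lambda(1 + (4\delta+1)\|F_1^0 x\|)}{4\delta}$ together with $\mathrm{dis} + \delta\,\mathrm{dis} = (1+\delta)\,\mathrm{dis}$. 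The only genuinely substantive step is the first paragraph's identification of the admissible pairs; once the correct $\eta_2 = \tfrac1\lambda(x-y)$ is chosen, everything reduces to Cauchy--Schwarz and solving a quadratic. The main thing to watch is the sign convention in the definition of $\mathrm{dis}$, which must come out so that the numerator is $\tfrac{r^2}{\lambda} - \|F_1^0 x\|\,r$ rather than its negative.
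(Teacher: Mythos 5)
Your proof is correct. Note that the paper offers no proof of this lemma at all---it is quoted from \cite{Kunze}---so there is nothing internal to compare against; your argument (feeding the pair $(x,F_1^0x)$ and the resolvent pair $\bigl(J^\lambda_{F_2}(x),\tfrac{1}{\lambda}(x-J^\lambda_{F_2}(x))\bigr)$ into the definition of ${\rm dis}$, reducing to the quadratic inequality $r^2-(a+D)r-Db\le 0$, and finishing with Young's inequality) is exactly the standard proof in that reference. The one point you leave tacit is that ${\rm dis}(F_1,F_2)\ge 0$, which is needed both for $\sqrt{bD}$ to make sense and for your bound on the larger root; this is a known fact for maximal monotone operators with nonempty domains (if it failed, the two graphs would merge into a single monotone set, forcing $F_1=F_2$ and ${\rm dis}=0$, a contradiction) and is in any case presupposed by the statement itself.
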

\begin{lemma}\label{closemm} \cite{Kunze}
Let $F_n$ be a sequence of maximal monotone operators in a Hilbert space $H$ such that ${\rm dis}(F_n,F)\to 0$ as $n\to+\infty$ for some maximal monotone operator $F$. Suppose that $x_n\in {\rm dom}(F_n)$ with $x_n\to x$ and that $y_n\in F_n(x_n)$ with $y_n\to y$ weakly for some $x, y\in H$. Then $x\in {\rm dom}(F)$ and $y\in F(x)$.
\end{lemma}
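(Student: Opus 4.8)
The plan is to prove the stronger assertion $y\in F(x)$, which automatically yields $x\in{\rm dom}(F)$, by exploiting the maximal monotonicity of $F$. Recall that, by the very definition of maximality, a pair $(x,y)$ belongs to the graph of $F$ as soon as it is monotonically related to every element of that graph; that is, it suffices to establish
$$\langle y-\eta,\,x-z\rangle\ge 0 \qquad \text{for all } z\in{\rm dom}(F),\ \eta\in F(z).$$
Equivalently, one could invoke Minty's Theorem (Proposition~\ref{minty}): choose $z\in{\rm dom}(F)$ and $\eta\in F(z)$ with $z+\eta=x+y$, whence $y-\eta=z-x$, and the displayed inequality forces $-\|x-z\|^2\ge 0$, so that $x=z$ and $y=\eta\in F(x)$.

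First I would fix an arbitrary pair $(z,\eta)$ with $\eta\in F(z)$ and feed the four points $x_n\in{\rm dom}(F_n)$, $y_n\in F_n(x_n)$ and $z\in{\rm dom}(F)$, $\eta\in F(z)$ into the definition of the Vladimirov pseudo-distance. Taking $F_1=F_n$, $F_2=F$ in that definition gives immediately
$$\langle y_n-\eta,\,z-x_n\rangle \le \big(1+|y_n|+|\eta|\big)\,{\rm dis}(F_n,F).$$
Since $y_n\to y$ weakly, the sequence $\{y_n\}$ is bounded by the uniform boundedness principle, so the factor $1+|y_n|+|\eta|$ remains bounded; as ${\rm dis}(F_n,F)\to 0$, the right-hand side tends to $0$.

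The crux is passing to the limit on the left-hand side, where only $y_n\to y$ weakly while $x_n\to x$ strongly. Splitting $\langle y_n-\eta,\,z-x_n\rangle=\langle y_n,\,z-x_n\rangle-\langle\eta,\,z-x_n\rangle$, the second term converges to $\langle\eta,z-x\rangle$ by strong convergence of $x_n$. For the first term I would use the elementary weak--strong product rule: if $a_n\rightharpoonup a$ and $b_n\to b$ strongly, then $\langle a_n,b_n\rangle\to\langle a,b\rangle$, which follows from the identity $\langle a_n,b_n\rangle-\langle a,b\rangle=\langle a_n,b_n-b\rangle+\langle a_n-a,b\rangle$ together with the boundedness of $\{a_n\}$. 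With $a_n=y_n$ and $b_n=z-x_n$ this gives $\langle y_n,z-x_n\rangle\to\langle y,z-x\rangle$. Hence the left-hand side converges to $\langle y-\eta,\,z-x\rangle$, and the inequality passes to the limit as $\langle y-\eta,\,z-x\rangle\le 0$, i.e. $\langle y-\eta,\,x-z\rangle\ge 0$.

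Since $(z,\eta)$ was an arbitrary point of the graph of $F$, the required monotone relation holds for every such pair, and maximality of $F$ closes the argument: $(x,y)$ lies in the graph of $F$, so $x\in{\rm dom}(F)$ and $y\in F(x)$. The main obstacle is precisely the weak--strong limit passage in the bilinear term; everything else is bookkeeping with the pseudo-distance. Note that the strong convergence $x_n\to x$ is genuinely needed here, since if both factors converged only weakly the product need not converge to the product of the limits.
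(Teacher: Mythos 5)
Your proof is correct. Note that the paper itself offers no proof of this lemma: it is quoted verbatim from the reference [Kunze] (Kunze--Monteiro Marques), so there is nothing internal to compare against; your argument is the standard one and is essentially the proof found in that reference. The three ingredients are all sound: (i) plugging the pairs $(x_n,y_n)$ and $(z,\eta)$ into Vladimirov's pseudo-distance with $F_1=F_n$, $F_2=F$ gives exactly $\langle y_n-\eta,\,z-x_n\rangle\le(1+\|y_n\|+\|\eta\|)\,{\rm dis}(F_n,F)$ (you also implicitly corrected the paper's typo $\eta_i\in F(z_i)$, which should read $\eta_i\in F_i(z_i)$); (ii) the weak--strong product rule handles the limit on the left, with boundedness of $\{y_n\}$ from weak convergence; (iii) either the characterization of maximality via monotonically related pairs, or your Minty-based variant (solving $z+\eta=x+y$ and concluding $x=z$, $y=\eta$), legitimately closes the argument, and the latter has the small advantage of using only Proposition~\ref{minty} as stated in the paper.
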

Let us end-up this section by recalling some versions of Gronwall's inequality.

\begin{lemma}\label{Gronwalldis}
Let $\alpha>0$ and $(u_n)$, $(\beta_n)$ be non-negative sequences satisfying 
\beq
u_n\le \alpha+\sum_{k=0}^{n-1}\beta_k u_k \;\;\forall n= 0,1,2,\ldots\;\; ({\rm with} \;\beta_{-1}:=0).
\eeq
Then, for all $n$, we have
$$u_n\le \alpha \;{\rm exp}\Big(\sum_{k=0}^{n-1}\beta_k\Big).$$
\end{lemma}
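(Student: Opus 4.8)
The plan is to reduce the recursive bound to a product and then dominate that product by an exponential. First I would introduce the auxiliary sequence $A_n := \alpha + \sum_{k=0}^{n-1}\beta_k u_k$, so that the hypothesis reads precisely $u_n \le A_n$ for every $n$, with $A_0 = \alpha$ (the empty sum being $0$, consistent with the convention $\beta_{-1}:=0$). Since $(u_k)$ and $(\beta_k)$ are non-negative, $A_n$ is non-decreasing and strictly positive, which is the feature that makes the substitution below legitimate at each step.

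The heart of the argument is a single one-step estimate. Directly from the definition one has $A_{n+1} = A_n + \beta_n u_n$, and substituting the hypothesis $u_n \le A_n$ gives $A_{n+1} \le (1+\beta_n)A_n$. Iterating this inequality starting from $A_0 = \alpha$ yields, by a straightforward induction on $n$, the product bound $A_n \le \alpha \prod_{k=0}^{n-1}(1+\beta_k)$, where the empty product is understood to be $1$ for $n=0$.

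Finally I would invoke the elementary inequality $1+t \le e^t$, valid for all $t \ge 0$, applied factor by factor to obtain $\prod_{k=0}^{n-1}(1+\beta_k) \le \exp\Big(\sum_{k=0}^{n-1}\beta_k\Big)$. Chaining this with $u_n \le A_n$ and the product bound gives exactly the claimed estimate $u_n \le \alpha\,\exp\Big(\sum_{k=0}^{n-1}\beta_k\Big)$.

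There is no serious obstacle in this lemma; it is elementary. The only point that deserves care is that one should \emph{not} attempt to induct directly on the sequence $(u_n)$, which need not be monotone, but rather on the running sum $A_n$, whose monotonicity is precisely what licenses the replacement $u_n \le A_n$ inside the recursion. Once the problem is phrased through $A_n$, the telescoping of the multiplicative factors and the exponential majorization are immediate.
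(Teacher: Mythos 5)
Your proof is correct. The paper itself gives no proof of this lemma --- it is recalled in Section 2 as a standard version of the discrete Gronwall inequality --- so there is nothing to compare against; your argument (introducing the running sum $A_n = \alpha + \sum_{k=0}^{n-1}\beta_k u_k$, deriving $A_{n+1} \le (1+\beta_n)A_n$, and majorizing the resulting product via $1+t\le e^t$) is the standard, complete proof of this fact. One small remark: what licenses the substitution $\beta_n u_n \le \beta_n A_n$ is simply the non-negativity of $\beta_n$ together with $u_n \le A_n$; the strict positivity and monotonicity of $A_n$, while true, are not actually needed at that step.
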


\begin{lemma}\label{gronwall}
Let $T>0$ be given and $a(\cdot),b(\cdot)\in L^1([0,T];\R)$ with $b(t)\ge 0$ for almost all $t\in [0,T].$ Let an absolutely continuous function $w: [0,T]\to \R_+$ satisfy
\beq
(1-\alpha)w'(t)\le a(t)w(t)+b(t)w^\alpha(t),\;\; a. e. \;t\in [0,T]
\eeq
where $0\le \alpha<1$. Then for all $t\in [0,T]$, we have 
\beq
w^{1-\alpha}(t)\le w^{1-\alpha}(0){\rm exp}\Big(\int_{0}^t a(\tau)d\tau\Big)+\int_{0}^t{\rm exp}\Big(\int_{s}^t a(\tau)d\tau\Big)b(s)ds.
\eeq
\end{lemma}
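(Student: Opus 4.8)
The plan is to reduce this Bernoulli-type differential inequality to a linear one through the substitution $v := w^{1-\alpha}$, and then to invoke the classical integrating-factor form of Gronwall's inequality. Formally, wherever $w > 0$ one multiplies the hypothesis by $w^{-\alpha}$ to obtain $v' = (1-\alpha)w^{-\alpha}w' \le a(t)v + b(t)$, which is exactly the linear inequality whose solution is the asserted bound. The difficulty is that this manipulation is illegitimate at points where $w$ vanishes (division by $w^\alpha$) and, more seriously, that $w^{1-\alpha}$ need not be absolutely continuous when $w$ touches $0$, since $x \mapsto x^{1-\alpha}$ is only H\"older, not Lipschitz, at the origin; hence the chain rule used above cannot be applied directly.

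To circumvent this, I would regularize by setting $w_\epsilon := w + \epsilon$ for $\epsilon > 0$, so that $w_\epsilon \ge \epsilon > 0$ and $w_\epsilon' = w'$ a.e. Using $w \le w_\epsilon$, the monotonicity of $x \mapsto x^\alpha$ together with $b \ge 0$, and the crude bound $a(t)w = a(t)w_\epsilon - a(t)\epsilon \le a(t)w_\epsilon + |a(t)|\epsilon$, the hypothesis yields $(1-\alpha)w_\epsilon' \le a(t)w_\epsilon + b(t)w_\epsilon^\alpha + |a(t)|\epsilon$ a.e. Now $v_\epsilon := w_\epsilon^{1-\alpha}$ is genuinely absolutely continuous, because $x \mapsto x^{1-\alpha}$ is Lipschitz on $[\epsilon, \infty)$; dividing the last inequality by $w_\epsilon^\alpha > 0$ and using $\epsilon w_\epsilon^{-\alpha} \le \epsilon^{1-\alpha}$ gives the bona fide linear inequality
$$v_\epsilon'(t) \le a(t)v_\epsilon(t) + b(t) + |a(t)|\,\epsilon^{1-\alpha} \quad \text{a.e. } t \in [0,T].$$

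At this point I would apply the standard linear Gronwall lemma: introducing the integrating factor $\exp\big(-\int_0^t a\big)$, the function $t \mapsto \exp\big(-\int_0^t a\big)\,v_\epsilon(t)$ has a.e. derivative bounded by $\exp\big(-\int_0^t a\big)\big(b(t) + |a(t)|\epsilon^{1-\alpha}\big)$, and integrating from $0$ to $t$ produces
$$v_\epsilon(t) \le v_\epsilon(0)\exp\Big(\int_0^t a\Big) + \int_0^t \exp\Big(\int_s^t a\Big)\big(b(s) + |a(s)|\epsilon^{1-\alpha}\big)\,ds.$$
Finally I would let $\epsilon \to 0^+$. Since $v_\epsilon(0) = (w(0)+\epsilon)^{1-\alpha} \to w^{1-\alpha}(0)$ and $v_\epsilon(t) \to w^{1-\alpha}(t)$ pointwise, while the error term $\epsilon^{1-\alpha}\int_0^t \exp\big(\int_s^t a\big)|a(s)|\,ds \to 0$ thanks to $a \in L^1$ and the local boundedness of the exponential weight, the claimed estimate follows.

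The only genuine obstacle is the one already flagged: ensuring the substitution is rigorous despite the possible vanishing of $w$ and the non-Lipschitz behaviour of $x \mapsto x^{1-\alpha}$ at $0$. The $\epsilon$-regularization resolves both the division-by-zero and the absolute-continuity issues simultaneously, and once $v_\epsilon$ is known to be absolutely continuous the remainder is the textbook linear Gronwall argument followed by a routine passage to the limit.
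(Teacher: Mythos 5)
Your proof is correct. Note that the paper itself offers no proof of Lemma \ref{gronwall}: it is simply recalled at the end of Section 2 as a known version of Gronwall's inequality, so there is no in-paper argument to compare yours against; your write-up stands as a complete, self-contained justification. The one genuinely delicate point --- that $x\mapsto x^{1-\alpha}$ is not Lipschitz at the origin, so $w^{1-\alpha}$ need not be absolutely continuous where $w$ vanishes and the naive division by $w^{\alpha}$ breaks down --- is exactly the point your regularization $w_\epsilon:=w+\epsilon$ fixes: since $w_\epsilon\ge\epsilon$, the map $v_\epsilon=w_\epsilon^{1-\alpha}$ is a composition of an absolutely continuous function with a map that is $C^1$ on a neighborhood of its range, so $v_\epsilon$ is absolutely continuous and the chain rule holds a.e. Your bookkeeping of the error terms is also sound: $b\ge 0$ and monotonicity give $bw^{\alpha}\le bw_\epsilon^{\alpha}$, the bound $\epsilon w_\epsilon^{-\alpha}\le\epsilon^{1-\alpha}$ is valid, the integrating-factor step is legitimate because $t\mapsto \exp\big(-\int_0^t a\big)v_\epsilon(t)$ is a product of absolutely continuous functions (so the fundamental theorem of calculus applies), and the limit $\epsilon\to 0^+$ kills the extra term since $\exp\big(\int_s^t a\big)\le \exp\big(\Vert a\Vert_{L^1}\big)$ and $a\in L^1$ make it $O(\epsilon^{1-\alpha})$.
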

\section{Main results} \label{section3}
In this section, the well-posedness and asymptotic behaviour   of problem $(\mathcal{S})$ are studied. From $(1b)$ and $(1c)$ of $(\mathcal{S})$, it is easy to compute $\lambda(\cdot)$ in term of $x(\cdot)$:
$$\lambda(t)\in-({N}^{-1}_{K(t,x(t))}+D)^{-1}(Cx(t)),\;\;a.e. \;t\ge 0.$$
Therefore, we can rewrite the system $(\mathcal{S})$ in the form of first order differential inclusion as follows
\baq\nonumber
\dot{x}(t)&\in& f(t,x(t))-B(N^{-1}_{K(t,x(t))}+D)^{-1}(Cx(t))\\\label{main1}
&=& f(t,x(t))- B \Phi(t,x(t),x(t))\;\;a.e.\;\;t\ge 0,
\eaq
where 
\beq
\Phi(t,x,y):=(N^{-1}_{K(t,y)}+D)^{-1}Cx,\;\; t\ge 0 , x, y\in \R^n.
\eeq
\noindent    Suppose that the following assumptions hold.\\

\noindent$\mathbf{Assumption \;1}:$ The set-valued mapping 
$K: [0,+\infty) \times \R^n \rightrightarrows  \R^m$
 has non-empty, closed convex values 
such that  $K\cap {\rm rge}(C)$ has non-empty values and  there exist $L_{K1}\ge 0, 0 \le L_{K2}\le \frac{c_2}{\Vert C \Vert}$ such that  for all  $ t, s \ge 0$ and $x, y\in \R^n$ we have  \\
$$
{\rm d}_H(K(t,x) \cap {\rm rge}(C),  K(s,y) \cap {\rm rge}(C))\le   L_{K1} \vert t-s \vert +L_{K2} \Vert x-y \Vert,
$$
where $c_2>0$ is the smallest positive eigenvalue of $CC^T.$ \\
\noindent$\mathbf{Assumption \;2}:$ The matrix $D$ is positive semidefinite with  ${\rm rge}(D)\subset {\rm rge}(C)$ and 
 $${\rm ker}(D+D^T)\subset {\rm ker}(PB-C^T)$$
for some symmetric positive definite matrix $P$.\\

\noindent$\mathbf{Assumption \;3}:$ For all $t\ge 0$, if  $(N^{-1}_{K(t,y)}+D)^{-1}Cx \neq \emptyset$ for some $x, y\in \R^n$, it holds that $ {\rm rge}(D+D^T) \cap (N^{-1}_{K(t,y)}+D)^{-1}Cx\neq \emptyset$.\\

\noindent$\mathbf{Assumption \;4}:$  For all $t\in [0,T]$, $x\in \R^n:$ 
 $ {\rm rge}(C) \;\cap {\rm rint(rge}(N^{-1}_{K(t,x)}+D)) \neq \emptyset $.\\

\noindent$\mathbf{Assumption \;5}:$  The function $f: [0,+\infty) \times \R^n \to \R^n$ is continuous in the first variable and $L_f$-Lipschitz continuous in the second variable in the sense that there exist a continuous function $v_f: [0,+\infty) \to \R$ and $L_f>0$ such that for all  $s, t\ge 0$ and $x, y\in \R^n$ we have
$$
\Vert f(t,x)-  f(s,y)) \Vert \le  \vert v_f(t)-v_f(s) \vert +L_f \Vert x-y \Vert.
$$
\begin{remark}
 It is not difficult to relax that the moving set $K$ varies in an absolutely continuous way with respect to the time. For simplicity of calculation, we suppose that  $K$ moves  Lipschitz continuously  in both  time and  state. 
\end{remark}
Let be given some arbitrary real number $T>0$. The following lemmas are useful.
\begin{lemma}\label{cct}
There exists a constant $c_2>0$ such that for all $x\in {\rm rge}(CC^T)$, we have 
\beq
\langle CC^T x, x \rangle \ge c_2 \Vert x \Vert^2.
\eeq
\end{lemma}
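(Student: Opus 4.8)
The plan is to recognize that Lemma \ref{cct} is simply the earlier fundamental Lemma \ref{estiD} specialized to the symmetric positive semidefinite matrix $CC^T$. Indeed, for every $x\in\R^n$ one has $\langle CC^T x,x\rangle=\langle C^T x,C^T x\rangle=\Vert C^T x\Vert^2\ge 0$, so $CC^T$ is positive semidefinite; it is moreover symmetric, whence $CC^T+(CC^T)^T=2CC^T$ and ${\rm rge}(CC^T+(CC^T)^T)={\rm rge}(CC^T)$. Applying Lemma \ref{estiD} with $D:=CC^T$ then produces a constant $c_2>0$ such that $\langle CC^T x,x\rangle\ge c_2\Vert x\Vert^2$ for all $x\in{\rm rge}(CC^T)$, which is exactly the assertion. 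This is the shortest route, since the needed lemma has already been recorded.

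If one prefers a self-contained spectral argument, which also pins down the value of $c_2$ used in Assumption 1, I would instead proceed as follows. Since $CC^T$ is symmetric and positive semidefinite, the spectral theorem furnishes an orthonormal basis $u_1,\dots,u_n$ of $\R^n$ of eigenvectors, with eigenvalues $\lambda_1,\dots,\lambda_n\ge 0$. By symmetry one has the orthogonal decomposition $\R^n={\rm ker}(CC^T)\oplus{\rm rge}(CC^T)$, and ${\rm rge}(CC^T)$ is exactly the span of those $u_i$ with $\lambda_i>0$. Set $c_2:=\min\{\lambda_i:\lambda_i>0\}$, the smallest positive eigenvalue, which is strictly positive (the index set being non-empty apart from the degenerate case $C=0$, in which the claim is vacuous).

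Now for any $x\in{\rm rge}(CC^T)$ I would expand $x=\sum_{i:\lambda_i>0}\alpha_i u_i$ and use orthonormality to get
\beq
\langle CC^T x,x\rangle=\sum_{i:\lambda_i>0}\lambda_i\alpha_i^2\ge c_2\sum_{i:\lambda_i>0}\alpha_i^2=c_2\Vert x\Vert^2,
\eeq
which is the desired inequality. I do not expect any genuine obstacle, as this is elementary linear algebra. The single point deserving a word of care is the identification of ${\rm rge}(CC^T)$ with the span of the eigenvectors attached to positive eigenvalues; this relies on the symmetry of $CC^T$ to guarantee the orthogonal splitting of $\R^n$ into kernel and range, after which the eigenvalue estimate is immediate.
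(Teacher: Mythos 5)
Your proposal is correct and follows the paper's own route: the paper likewise observes that $CC^T$ is symmetric positive semidefinite and invokes Lemma \ref{estiD}, noting that $c_2$ can be taken as the smallest positive eigenvalue of $CC^T$ when $C\neq 0$. Your supplementary spectral argument merely makes explicit what the paper leaves as a remark, so there is no substantive difference.
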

\begin{proof}
It is easy to see that the matrix $CC^T$ is symmetric positive semidefinite and the conclusion follows thanks to Lemma \ref{estiD}. In addition, $c_2$ can be chosen as the smallest positive eigenvalue of $CC^T$ if $C\neq 0$.
\end{proof}
\begin{lemma}\label{chypo}
Let Assumption $1$ hold. Suppose that 
\beq
a_i\in {\rm N}_{K(t_i,x_i)}(b_i)\;\;\;{\rm for}\; \;a_i\in \R^m,  b_i \in {\rm rge}(C), x_i \in \R^n,\;   t_i \ge 0 \; (i=1,2).
\eeq
Then 
\beq 
\langle a_1-a_2, b_1-b_2 \rangle  \ge -  (\Vert a_1\Vert+ \Vert a_2 \Vert )({L}_{K1}\vert t_2-t_1 \vert+{L}_{K2}\Vert x_1-x_2 \Vert)
\eeq
where the constant numbers $L_{K1}$ and $L_{K2}>0$ are defined in Assumption $1$.
\end{lemma}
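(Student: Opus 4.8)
The plan is to exploit the Hausdorff--Lipschitz estimate of Assumption 1 together with the defining variational inequality of the normal cone. Write $A_i := K(t_i,x_i)\cap {\rm rge}(C)$ for $i=1,2$; these sets are non-empty by Assumption 1, and closed and convex, being intersections of the closed convex sets $K(t_i,x_i)$ with the subspace ${\rm rge}(C)$. Since $a_i\in N_{K(t_i,x_i)}(b_i)$ forces $b_i\in K(t_i,x_i)$, and $b_i\in {\rm rge}(C)$ by hypothesis, we have $b_i\in A_i$. The key point, and the reason the estimate survives even though the normal cone is taken with respect to $K(t_i,x_i)$ rather than $A_i$, is that the nearest point in $A_i$ to any given vector automatically lies in $K(t_i,x_i)$ and is therefore an admissible test vector in the normal-cone inequality for $K(t_i,x_i)$. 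This is exactly where the hypothesis $b_i\in{\rm rge}(C)$ is indispensable, and it explains why the Lipschitz control in Assumption 1 is imposed on $K\cap{\rm rge}(C)$ rather than on $K$ itself; I expect this feasibility observation to be the only delicate step.

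First I would bound the cross term $\langle a_1, b_1-b_2\rangle$. Let $\tilde b_2:={\rm proj}(b_2;A_1)\in A_1\subseteq K(t_1,x_1)$; since $A_1$ is closed and convex this projection exists and satisfies $\Vert b_2-\tilde b_2\Vert = d(b_2,A_1)\le d_H(A_1,A_2)$, the last inequality because $b_2\in A_2$. Testing the inclusion $a_1\in N_{K(t_1,x_1)}(b_1)$ against $y=\tilde b_2$ gives $\langle a_1,\tilde b_2-b_1\rangle\le 0$, hence
\[
\langle a_1,b_2-b_1\rangle=\langle a_1,\tilde b_2-b_1\rangle+\langle a_1,b_2-\tilde b_2\rangle\le \Vert a_1\Vert\, d_H(A_1,A_2),
\]
so that $\langle a_1,b_1-b_2\rangle\ge -\Vert a_1\Vert\, d_H(A_1,A_2)$.

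By the symmetric argument with $\tilde b_1:={\rm proj}(b_1;A_2)\in A_2\subseteq K(t_2,x_2)$ and the inclusion $a_2\in N_{K(t_2,x_2)}(b_2)$, I obtain $\langle a_2,b_1-b_2\rangle\le \Vert a_2\Vert\, d_H(A_1,A_2)$, i.e. $-\langle a_2,b_1-b_2\rangle\ge -\Vert a_2\Vert\, d_H(A_1,A_2)$. Adding the two bounds yields
\[
\langle a_1-a_2,b_1-b_2\rangle\ge -(\Vert a_1\Vert+\Vert a_2\Vert)\,d_H(A_1,A_2),
\]
and the conclusion follows upon inserting the Assumption 1 estimate $d_H(A_1,A_2)\le L_{K1}\vert t_1-t_2\vert+L_{K2}\Vert x_1-x_2\Vert$, using $\vert t_1-t_2\vert=\vert t_2-t_1\vert$.

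As a sanity check on the constant, one could instead observe that $N_{K(t_i,x_i)}(b_i)\subseteq N_{A_i}(b_i)$ (normals to a smaller convex set are less restrictive), whence $a_i\in N_{A_i}(b_i)$, and then invoke Vladimirov's identity ${\rm dis}(N_{A_1},N_{A_2})=d_H(A_1,A_2)$ from Lemma \ref{hauslem}. However, the pseudo-distance carries an extra additive $1$ in its denominator and so only produces the weaker lower bound with $1+\Vert a_1\Vert+\Vert a_2\Vert$ in place of $\Vert a_1\Vert+\Vert a_2\Vert$. For this reason the direct projection argument above is preferable, and it is what I would write out in full.
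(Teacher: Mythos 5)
Your proof is correct and takes essentially the same route as the paper: the paper phrases the key step as the inclusion $K(t_2,x_2)\cap{\rm rge}(C)\subset K(t_1,x_1)\cap{\rm rge}(C)+({L}_{K1}\vert t_2-t_1\vert+{L}_{K2}\Vert x_1-x_2\Vert)\ball$, which is exactly what your projection $\tilde b_2={\rm proj}(b_2;A_1)$ furnishes, namely an admissible test point in $K(t_1,x_1)$ within the Hausdorff-distance bound of $b_2$. The remaining steps (testing the normal-cone inequality at that point, the symmetric estimate for $a_2$, and adding the two bounds) coincide with the paper's argument.
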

\begin{proof}
We have 
\beq
\langle a_1, z-b_1 \rangle \le 0, \;\;{\rm for\;all}\; z\in K(t_1,x_1).
\eeq
Note that  $b_2\in K(t_2,x_2)   \cap {\rm rge}(C)\subset K(t_1,x_1) \cap {\rm rge}(C)+({L}_{K1}\vert t_2-t_1 \vert+{L}_{K2}\Vert x_1-x_2 \Vert)\ball$. {Combining} with the last inequality, we obtain 
\beq \label{estim1}
\langle a_1, b_2-b_1\rangle \le ({L}_{K1}\vert t_2-t_1 \vert+{L}_{K2}\Vert x_1-x_2 \Vert) \Vert a_1 \Vert.
\eeq
Similarly, one has 
\beq \label{estim2}
\langle a_2, b_1-b_2 \rangle \le ({L}_{K1}\vert t_2-t_1 \vert+{L}_{K2}\Vert x_1-x_2 \Vert) \Vert a_2 \Vert.
\eeq
From (\ref{estim1}) and (\ref{estim2}), we deduce that 
\baqn
\langle a_1-a_2, b_1-b_2 \rangle \ge -(\Vert a_1\Vert+ \Vert a_2 \Vert )({L}_{K1}\vert t_2-t_1 \vert+{L}_{K2}\Vert x_1-x_2 \Vert),
\eaqn
and the conclusion follows. \qed
\end{proof}

\begin{lemma}\label{estif}
Let Assumption $5$ hold. Then there exists $\alpha_1>0$ such that 
\beq
\Vert f(t,x) \Vert \le \alpha_1(1+\Vert x \Vert ),\;\;{\rm for\;all}\;t\in [0,T], \;x\in \R^n.
\eeq
\end{lemma}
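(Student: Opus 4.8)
The plan is to derive this linear growth bound directly from Assumption $5$ by anchoring the estimate at a single reference pair $(0,0)\in[0,T]\times\R^n$ and using the triangle inequality. First I would split
$$
\Vert f(t,x)\Vert \le \Vert f(t,x)-f(t,0)\Vert + \Vert f(t,0)-f(0,0)\Vert + \Vert f(0,0)\Vert ,
$$
so that the problem reduces to controlling the first two differences, each of which is exactly the kind of quantity Assumption $5$ estimates.

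Next I would apply the hypothesis of Assumption $5$ to each term separately. Taking $s=t$ and $y=0$ gives $\Vert f(t,x)-f(t,0)\Vert \le \vert v_f(t)-v_f(t)\vert + L_f\Vert x\Vert = L_f\Vert x\Vert$, which already produces the factor multiplying $\Vert x\Vert$. Taking $x=y=0$ and comparing the times $t$ and $0$ gives $\Vert f(t,0)-f(0,0)\Vert \le \vert v_f(t)-v_f(0)\vert$. The only point requiring the restriction to a bounded interval is that this last quantity must be bounded uniformly in $t$: since $v_f$ is continuous on the compact interval $[0,T]$, the nonnegative continuous map $t\mapsto \vert v_f(t)-v_f(0)\vert$ attains a finite maximum $M:=\max_{t\in[0,T]}\vert v_f(t)-v_f(0)\vert<+\infty$ there.

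Collecting the pieces, for every $t\in[0,T]$ and $x\in\R^n$ one obtains $\Vert f(t,x)\Vert \le L_f\Vert x\Vert + M + \Vert f(0,0)\Vert$, and choosing $\alpha_1:=\max\{L_f,\; M+\Vert f(0,0)\Vert\}$ yields the claimed inequality $\Vert f(t,x)\Vert\le \alpha_1(1+\Vert x\Vert)$. There is no real obstacle in this argument; the estimate is essentially a repackaging of Lipschitz-in-state plus continuity-in-time, and the sole substantive input is the compactness of $[0,T]$, which converts continuity of $v_f$ into the uniform bound $M$. I would only be careful to state $\alpha_1$ in a way that makes both the $\Vert x\Vert$-term and the constant term fit under the single coefficient $\alpha_1$.

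\qed
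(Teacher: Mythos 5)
Your proof is correct and follows essentially the same route as the paper: Lipschitz continuity in the state variable (via Assumption 5 with $s=t$, $y=0$) plus compactness of $[0,T]$ to bound the time-dependent part, ending with $\alpha_1$ as a maximum of the two resulting constants. The only cosmetic difference is that you bound $\Vert f(t,0)\Vert$ through $v_f$ and $\Vert f(0,0)\Vert$, whereas the paper takes $\max_{t\in[0,T]}\Vert f(t,0)\Vert$ directly, using the continuity of $f$ in its first variable.
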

\begin{proof}
We have 
\beq
\Vert f(t,x) \Vert \le \Vert f(t,0) \Vert+L_f \Vert x  \Vert,
\eeq
and the conclusion follows with $\alpha_1:=\max \{\max_{t\in [0,T]} \Vert f(t,0) \Vert, L_f \}.$ \qed
\end{proof}

\begin{lemma}\label{Lipschitz}
Let Assumptions $1, 2, 3$ hold. Then there exist $\alpha_2, \alpha_3>0$ such that the single-valued minimal-norm  function 
$\Phi^0:  [0,T] \times \R^{2n}\to  {\rm rge}(D+D^T), (t,x,y)\mapsto \Phi^0(t,x,y)$ satisfies the following properties: \\

a) $\Vert \Phi^0(t,x,y) \Vert \le \alpha_2(1+\Vert x \Vert+\Vert y \Vert),\;\;\forall (t,x,y)\in  {\rm dom}(\Phi^0)$.\\

b) $\Vert \Phi^0(t_1,x_1,y_1) -\Phi^0(t_2,x_2,y_2)\Vert^2 \le \alpha_3 \Vert x_1-x_2 \Vert^2+ \alpha_3 (\Vert \Phi^0(t_1,x_1,y_1)\Vert+\Vert \Phi^0(t_2,x_2,y_2)\Vert)(\vert t_1-t_2 \vert+\Vert y_1-y_2  \Vert),\;\; \forall \;(t_i,x_i,y_i)\in  {\rm dom}(\Phi^0), i=1, 2$.
\end{lemma}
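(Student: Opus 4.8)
The plan is to reduce everything to the characterizing property of the selection $\Phi^0$. Writing $\lambda_i:=\Phi^0(t_i,x_i,y_i)$, the membership $\lambda\in\Phi(t,x,y)=(N^{-1}_{K(t,y)}+D)^{-1}Cx$ is equivalent to $Cx-D\lambda\in N^{-1}_{K(t,y)}(\lambda)$, i.e. to $\lambda\in N_{K(t,y)}(b)$ with $b:=Cx-D\lambda$. By Assumption $2$ we have $D\lambda\in{\rm rge}(D)\subset{\rm rge}(C)$, so $b\in{\rm rge}(C)$, and by Assumption $3$ one may select $\lambda\in{\rm rge}(D+D^T)$. This selection is single-valued: if $\lambda,\lambda'\in\Phi(t,x,y)\cap{\rm rge}(D+D^T)$, then monotonicity of $N_{K(t,y)}$ gives $\langle\lambda-\lambda',b-b'\rangle\ge 0$ with $b-b'=-D(\lambda-\lambda')$, whence $\langle D(\lambda-\lambda'),\lambda-\lambda'\rangle\le 0$; since $\lambda-\lambda'\in{\rm rge}(D+D^T)$, Lemma \ref{estiD} forces $\lambda=\lambda'$. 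This legitimizes the notation and supplies the two facts used throughout: $\lambda_i\in{\rm rge}(D+D^T)$ and $\lambda_i\in N_{K(t_i,y_i)}(b_i)$ with $b_i:=Cx_i-D\lambda_i\in{\rm rge}(C)$.

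For a), fix $(t,x,y)$, abbreviate $\lambda:=\Phi^0(t,x,y)$, $b:=Cx-D\lambda$, and choose the reference point $z_0:=(K(t,y)\cap{\rm rge}(C))^0$, which exists by Assumption $1$. The normal-cone inequality at $z=z_0\in K(t,y)$ reads $\langle\lambda,z_0-b\rangle\le 0$, i.e. $\langle\lambda,b\rangle\ge\langle\lambda,z_0\rangle$. Since $\langle\lambda,b\rangle=\langle\lambda,Cx\rangle-\langle D\lambda,\lambda\rangle$ and $\lambda\in{\rm rge}(D+D^T)$, Lemma \ref{estiD} yields $c_1\|\lambda\|^2\le\langle D\lambda,\lambda\rangle=\langle\lambda,Cx\rangle-\langle\lambda,b\rangle\le\langle\lambda,Cx-z_0\rangle\le\|\lambda\|(\|C\|\,\|x\|+\|z_0\|)$, hence $c_1\|\lambda\|\le\|C\|\,\|x\|+\|z_0\|$. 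It remains to bound $\|z_0\|=d(0,K(t,y)\cap{\rm rge}(C))$. Comparing with the fixed set $K(0,0)\cap{\rm rge}(C)$ through $|d(0,A)-d(0,B)|\le d_H(A,B)$ and the Lipschitz estimate of Assumption $1$ (with $t\le T$) gives $\|z_0\|\le c_0+L_{K1}T+L_{K2}\|y\|$, where $c_0:=d(0,K(0,0)\cap{\rm rge}(C))$. Collecting constants proves a) with $\alpha_2:=c_1^{-1}\max\{c_0+L_{K1}T,\|C\|,L_{K2}\}$.

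For b), I would apply Lemma \ref{chypo} to the inclusions $\lambda_i\in N_{K(t_i,y_i)}(b_i)$, $b_i\in{\rm rge}(C)$, getting $\langle\lambda_1-\lambda_2,b_1-b_2\rangle\ge-(\|\lambda_1\|+\|\lambda_2\|)(L_{K1}|t_1-t_2|+L_{K2}\|y_1-y_2\|)$. Substituting $b_1-b_2=C(x_1-x_2)-D(\lambda_1-\lambda_2)$ and isolating the $D$-term yields $\langle D(\lambda_1-\lambda_2),\lambda_1-\lambda_2\rangle\le\langle\lambda_1-\lambda_2,C(x_1-x_2)\rangle+(\|\lambda_1\|+\|\lambda_2\|)(L_{K1}|t_1-t_2|+L_{K2}\|y_1-y_2\|)$. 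Because $\lambda_1-\lambda_2\in{\rm rge}(D+D^T)$, Lemma \ref{estiD} bounds the left side below by $c_1\|\lambda_1-\lambda_2\|^2$, while Young's inequality gives $\langle\lambda_1-\lambda_2,C(x_1-x_2)\rangle\le\frac{c_1}{2}\|\lambda_1-\lambda_2\|^2+\frac{\|C\|^2}{2c_1}\|x_1-x_2\|^2$. Absorbing the first summand on the left and multiplying by $2/c_1$ gives b) after enlarging the constant to $\alpha_3:=\max\{\|C\|^2c_1^{-2},2L_{K1}c_1^{-1},2L_{K2}c_1^{-1}\}$.

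The decisive structural point, and the only place needing real care, is that every vector fed to Lemma \ref{estiD} — the selections $\lambda_i$ and their difference — lies in ${\rm rge}(D+D^T)$; this is precisely what Assumptions $2$ and $3$ guarantee, and it is what makes the coercivity $\langle D\cdot,\cdot\rangle\ge c_1\|\cdot\|^2$ usable even though $D$ is only positive semidefinite. The second subtlety is that the cross term $\langle\lambda_1-\lambda_2,C(x_1-x_2)\rangle$ contains the very quantity $\|\lambda_1-\lambda_2\|$ to be estimated; the Young splitting removes the circularity by transferring half of the coercive term to the right. The remaining steps — existence and control of $z_0$, and the bookkeeping of constants — are routine.
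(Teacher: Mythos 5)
Your part b) and your overall strategy are sound, but there is one genuine (if easily repaired) gap at the start: the lemma is about the \emph{minimal-norm} selection $\Phi^0$, and you never prove that the minimal-norm element of $\Phi(t,x,y)$ coincides with your distinguished element $z_0\in\Phi(t,x,y)\cap{\rm rge}(D+D^T)$. Your single-valuedness computation compares two elements that \emph{both} lie in ${\rm rge}(D+D^T)$, so their difference lies in ${\rm rge}(D+D^T)$ and Lemma \ref{estiD} applies; but for a general $z_1\in\Phi(t,x,y)$ the difference $z_1-z_0$ need not lie in ${\rm rge}(D+D^T)$, and that argument says nothing about it. Hence the two facts you declare ``used throughout''--- that $\lambda_i=\Phi^0(t_i,x_i,y_i)$ lies in ${\rm rge}(D+D^T)$ --- are not established for the object named in the statement, and both a) and b) rest on them. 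The repair is exactly the paper's first step: for arbitrary $z_1\in\Phi(t,x,y)$, monotonicity of $N_{K(t,y)}$ together with positive semidefiniteness of $D$ gives $\langle D(z_1-z_0),z_1-z_0\rangle=0$ (your computation gives $\le 0$, semidefiniteness gives $\ge 0$), hence $(D+D^T)(z_1-z_0)=0$ since $D+D^T$ is symmetric positive semidefinite; writing $z_1=z_1^{im}+z_1^{ker}$ along the orthogonal decomposition ${\rm rge}(D+D^T)\oplus\ker(D+D^T)$ then forces $z_1^{im}=z_0$, so $\Vert z_1\Vert^2=\Vert z_0\Vert^2+\Vert z_1^{ker}\Vert^2\ge\Vert z_0\Vert^2$ and $z_0$ is indeed $\Phi^0(t,x,y)$.

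With that patched, the comparison is as follows. Your part b) is essentially the paper's own proof: Lemma \ref{chypo} applied to $\lambda_i\in N_{K(t_i,y_i)}(Cx_i-D\lambda_i)$, coercivity of $D$ on ${\rm rge}(D+D^T)$ via Lemma \ref{estiD}, and the Young splitting of $\langle C(x_1-x_2),\lambda_1-\lambda_2\rangle$ to break the circularity --- same steps, same constants. Your part a) genuinely diverges: the paper compares $\Phi^0(t,x,y)$ against the reference value $\Phi^0(0,x_0,x_0)$ via Lemma \ref{chypo}, which requires a point of ${\rm dom}(\Phi^0)$ (the initial condition $x_0\in\mathcal{A}$) and passes through a quadratic inequality in $\Vert\Phi^0(t,x,y)\Vert$; you instead test the normal-cone inequality at the minimal-norm point $z_0=(K(t,y)\cap{\rm rge}(C))^0$ and control $\Vert z_0\Vert$ through $\vert d(0,A)-d(0,B)\vert\le d_H(A,B)$ and Assumption 1. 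Your route needs only the non-emptiness of $K\cap{\rm rge}(C)$ rather than a reference point in ${\rm dom}(\Phi^0)$, yields the linear bound directly, and gives a constant $\alpha_2$ independent of the initial condition; both routes are correct and rely on the same two mechanisms (coercivity of $D$ on ${\rm rge}(D+D^T)$ and Lipschitz motion of $K\cap{\rm rge}(C)$).
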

\begin{proof}
a) Given $(t,x,y)\in {\rm dom}(\Phi^0)$. Then $(N^{-1}_{K(t,y)}+D)^{-1}Cx \neq \emptyset$. Using Assumption $3$, we can find some $z_0\in {\rm rge}(D+D^T) \cap (N^{-1}_{K(t,y)}+D)^{-1}(Cx)={\rm rge}(D+D^T) \cap  \Phi(t,x,y)$. First, we prove that $\Phi^0(t,x,y)=z_0 \in {\rm rge}(D+D^T).$ Indeed, for each $z_1\in \Phi(t,x,y)$, it is sufficient to show that $\Vert z_1 \Vert \ge \Vert z_0\Vert$. We can write uniquely $z_1=z_1^{im}+z_1^{ker}$ where $z_1^{im}\in {\rm rge}(D+D^T), z_1^{ker}\in {\rm ker}(D+D^T)$ and $\langle z_1^{im}, z_1^{ker} \rangle=0$. One has
\beq\label{belongphi}
z_i\in (N^{-1}_{K(t,y)}+D)^{-1}(Cx) \Leftrightarrow z_i \in N_{K(t,y)}(Cx-Dz_i), \;i=0, 1.
\eeq
The monotonicity of $N_{K(t,y)}$ and $D$ allows us to deduce that $\langle D(z_0-z_1), z_0-z_1 \rangle =0$, or equivalently $z_1-z_0=z_1^{im}+z_1^{ker}-z_0 \in {\rm ker}(D+D^T)$. Therefore, $z_1^{im}-z_0 \in {\rm ker}(D+D^T) \cap {\rm rge}(D+D^T)=\{0\}$.
Consequently 
\beq
\Vert z_1 \Vert^2=  \Vert z_1^{im}\Vert^2+\Vert z_1^{ker} \Vert^2=\Vert z_0  \Vert^2+\Vert z_1^{ker} \Vert^2 \ge \Vert z_0  \Vert^2,
\eeq
and thus, we have $\Phi^0(t,x,y)=z_0 \in {\rm rge}(D+D^T)$. 

 Now, fix $(0,x_0,x_0)\in {\rm dom}(\Phi^0)$, where $x_0$ is an initial point of problem $(\mathcal{S})$.  Similarly as in (\ref{belongphi}) and  using Lemma \ref{chypo}, one obtains 
 \baq\nonumber
&& \langle C(x-x_0), \Phi^0(t,x,y)-\Phi^0(0,x_0,x_0) \rangle \\\nonumber
 &\ge&  \langle D(\Phi^0(t,x,y)-\Phi^0(0,x_0,x_0)), \Phi^0(t,x,y)-\Phi^0(0,x_0,x_0)\rangle\\\nonumber
& - &  (\Vert\Phi^0(t,x,y)\Vert+\Vert \Phi^0(0,x_0,x_0)\Vert) (tL_{K1}+   L_{K2}\Vert y-x_0 \Vert)\\\nonumber
 &\ge& c_1  \Vert\Phi^0(t,x,y)-\Phi^0(0,x_0,x_0)\Vert^2\\
 &-&(\Vert\Phi^0(t,x,y)\Vert+\Vert \Phi^0(0,x_0,x_0)\Vert) (TL_{K1}+L_{K2}\Vert y-x_0 \Vert),
 \label{est1t2}
 \eaq
 where $c_1>0$ is defined in Lemma \ref{estiD}.
  Thus we can find some $\beta>0$ such that 
\baqn
  \Vert\Phi^0(t,x,y)\Vert^2 &\le& \Vert\Phi^0(t,x,y)\Vert (\beta \Vert x \Vert+\beta \Vert y \Vert+ \beta) + \beta (\Vert x \Vert+\beta \Vert y \Vert+1)\\
  &\le& \beta(\Vert\Phi^0(t,x,y)\Vert+1)(\Vert x \Vert+ \Vert y \Vert+1)
\eaqn
 and the conclusion follows with $\alpha_2:=2\beta+1$. \\
 
 b) Similarly as in (\ref{est1t2}), for all $(t_i,x_i,y_i)\in  {\rm dom}(\Phi^0), i=1, 2$ we have
 \baq\nonumber
 && \langle C(x_1-x_2), \Phi^0(t_1,x_1,y_1)-\Phi^0(t_2,x_2,y_2) \rangle 
 \ge c_1  \Vert\Phi^0(t_1,x_1,y_1)-\Phi^0(t_2,x_2,y_2)\Vert^2\\
 &-&(\Vert\Phi^0(t_1,x_1,y_1)\Vert+\Vert \Phi^0(t_2,x_2,y_2)\Vert) (L_{K1}\vert t_1-t_2\vert+L_{K2}\Vert y_1-y_2 \Vert).
 \eaq
 Let us note that 
 \baqn
 && \langle C(x_1-x_2), \Phi^0(t_1,x_1,y_1)-\Phi^0(t_2,x_2,y_2) \rangle \\
  &\le& \frac{c_1}{2}\Vert\Phi^0(t_1,x_1,y_1)-\Phi^0(t_2,x_2,y_2)\Vert^2+\frac{\Vert C \Vert^2}{2c_1}\Vert x_1-x_2 \Vert^2,
 \eaqn
 and hence we obtain the conclusion. \qed
\end{proof}
\begin{lemma}\label{espera}
Suppose that $P\equiv I$, the identity matrix. Then for all $t, x, y \in {\rm dom}(\Phi)$, we have 
\baqn
B\Phi(t,x,y)&=&(B-C^T)\Phi(t,x,y)+C^T\Phi(t,x,y)\\
&=&(B-C^T)\Phi^0(t,x,y)+C^T\Phi(t,x,y).
\eaqn
\end{lemma}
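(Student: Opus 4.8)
The plan is to read both displayed identities elementwise in the set-valued object $\Phi(t,x,y)\subset\R^m$: fix $(t,x,y)\in{\rm dom}(\Phi)$ and verify the two lines for an arbitrary $z\in\Phi(t,x,y)$, keeping in mind that $\Phi^0(t,x,y)$ is by definition a particular element of $\Phi(t,x,y)$. The first equality is then purely algebraic and needs no hypothesis: from the splitting $B=(B-C^T)+C^T$ one gets $Bz=(B-C^T)z+C^Tz$ for every $z$, which is exactly the first line.

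The content of the statement is the second equality, and the only thing to prove is that the map $z\mapsto(B-C^T)z$ is constant on $\Phi(t,x,y)$ with value $(B-C^T)\Phi^0(t,x,y)$; equivalently, that $(B-C^T)\bigl(z-\Phi^0(t,x,y)\bigr)=0$ for all $z\in\Phi(t,x,y)$. Here I would reuse the structural fact already extracted in the proof of Lemma \ref{Lipschitz}: writing $z_0:=\Phi^0(t,x,y)$, the monotonicity of $N_{K(t,y)}$ together with that of $D$ forces $\langle D(z-z_0),z-z_0\rangle=0$, and hence $z-z_0\in{\rm ker}(D+D^T)$. Since we now assume $P\equiv I$, Assumption $2$ reads ${\rm ker}(D+D^T)\subset{\rm ker}(PB-C^T)={\rm ker}(B-C^T)$, so $z-z_0\in{\rm ker}(B-C^T)$ and therefore $(B-C^T)z=(B-C^T)z_0=(B-C^T)\Phi^0(t,x,y)$. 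Adding $C^Tz$ to both sides gives $Bz=(B-C^T)\Phi^0(t,x,y)+C^Tz$, i.e. the second line.

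There is essentially no obstacle here beyond invoking the inclusion ${\rm ker}(D+D^T)\subset{\rm ker}(B-C^T)$, which is precisely the point where the normalization $P\equiv I$ is used to simplify $PB-C^T$ to $B-C^T$ in Assumption $2$. The single subtlety worth stating explicitly is the set-valued reading of the identities: both equalities are meant pointwise in $z$, so that $C^T\Phi(t,x,y)$ stays the full set $\{C^Tz:z\in\Phi(t,x,y)\}$ while the $(B-C^T)$-contribution collapses to the fixed vector $(B-C^T)\Phi^0(t,x,y)$; because that contribution is a singleton, adding it amounts to a translation and the two descriptions $B\Phi(t,x,y)$ and $(B-C^T)\Phi^0(t,x,y)+C^T\Phi(t,x,y)$ coincide as subsets of $\R^n$.
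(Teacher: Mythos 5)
Your proof is correct and follows essentially the same route as the paper's: reduce the claim to showing $(B-C^T)$ is constant on $\Phi(t,x,y)$, use the monotonicity of $N_{K(t,y)}$ and $D$ (as in Lemma \ref{Lipschitz}) to get $z-\Phi^0(t,x,y)\in{\rm ker}(D+D^T)$, and conclude via Assumption $2$ with $P\equiv I$ that $(B-C^T)z=(B-C^T)\Phi^0(t,x,y)$. You even make explicit two points the paper leaves implicit (the elementwise set-valued reading and the trivial first equality), so no gaps remain.
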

\begin{proof}
It is sufficient to prove that $(B-C^T)\Phi$ is single-valued function and $(B-C^T)\Phi(t,x,y)=(B-C^T)\Phi^0(t,x,y)$. Let $z\in \Phi(t,x,y)$. Similarly as in the proof of Lemma \ref{Lipschitz}, we can write $z=\Phi^0(t,x,y)+z^{ker}$, where $z^{ker}$ is the projection of $z$ onto $\ker(D+D^T).$ Since $\ker(D+D^T)\subset (B-C^T)$, we have $(B-C^T)z=(B-C^T)(\Phi^0(t,x,y)+z^{ker})=(B-C^T)\Phi^0(t,x,y)$ and the proof is completed. 
\end{proof}

Let us recall the following result, which is firstly given in \cite{Le1} for $D=0$, see also \cite{abc}.
\begin{lemma}\label{haus}
Let be given two closed convex set $K_1, K_2$ such that $K_i \cap {\rm rge}(C) \neq \emptyset,\; {\rm rge}(D) \subset {\rm rge}(C)$ and let ${G}_i:=C^T(N_{K_i}^{-1}+D)^{-1}C, i= 1, 2$. Then 
\beq
{\rm dis}({G}_1, {G}_2) \le \frac{\Vert C \Vert}{c_2} d_H (K_1 \cap {\rm rge}(C), K_2 \cap {\rm rge}(C)),
\eeq
where $c_2>0$ is defined in Lemma \ref{cct}.
\end{lemma}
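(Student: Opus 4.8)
The plan is to unwind the pseudo-distance directly from its definition and reduce everything to a normal-cone inequality of the type already used in Lemma~\ref{chypo}. Fix $z_i\in{\rm dom}(G_i)$ and $\eta_i \in G_i(z_i)$, $i=1,2$. By definition of $G_i = C^T(N_{K_i}^{-1}+D)^{-1}C$ I can write $\eta_i = C^T\lambda_i$ with $\lambda_i \in (N_{K_i}^{-1}+D)^{-1}(Cz_i)$, which by the same algebraic equivalence as in~(\ref{belongphi}) means $\lambda_i \in N_{K_i}(b_i)$ where $b_i := Cz_i - D\lambda_i$. The first key observation is that $b_i \in {\rm rge}(C)$: indeed $Cz_i \in {\rm rge}(C)$ and $D\lambda_i \in {\rm rge}(D) \subseteq {\rm rge}(C)$ by hypothesis, so in fact $b_i \in K_i \cap {\rm rge}(C)$. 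Writing $Cz_i = b_i + D\lambda_i$, the numerator of the defining quotient becomes
\[
\langle \eta_1 - \eta_2, z_2 - z_1\rangle = \langle \lambda_1 - \lambda_2, Cz_2 - Cz_1\rangle = \langle \lambda_1 - \lambda_2, b_2 - b_1\rangle - \langle D(\lambda_1-\lambda_2), \lambda_1 - \lambda_2\rangle .
\]

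Next I would discard the quadratic term using the positive semidefiniteness of $D$ (Assumption~2), so that the numerator is at most $\langle \lambda_1 - \lambda_2, b_2 - b_1\rangle$, and then estimate this exactly as in the proof of Lemma~\ref{chypo}, but with the Hausdorff distance $d := d_H(K_1\cap{\rm rge}(C),\, K_2\cap{\rm rge}(C))$ in place of Assumption~1's Lipschitz bound. Since $b_2 \in K_2\cap{\rm rge}(C)$ lies within $d$ of some point $z^\ast \in K_1\cap{\rm rge}(C)\subset K_1$, the inclusion $\lambda_1 \in N_{K_1}(b_1)$ yields $\langle \lambda_1, b_2 - b_1\rangle = \langle\lambda_1,z^\ast-b_1\rangle+\langle\lambda_1,b_2-z^\ast\rangle \le \|\lambda_1^r\|\,d$, where $\lambda_1^r$ is the orthogonal projection of $\lambda_1$ onto ${\rm rge}(C)$; the crucial point is that the slack vector $b_2 - z^\ast$ lies in ${\rm rge}(C)$, so only the component $\lambda_1^r$ contributes to the pairing. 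Treating the symmetric term the same way gives $\langle \lambda_1 - \lambda_2, b_2 - b_1\rangle \le (\|\lambda_1^r\| + \|\lambda_2^r\|)\,d$.

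Finally I would convert the bound from $\lambda_i^r$ back to $\eta_i$, which is where the constant $\|C\|/c_2$ appears. Since $\lambda_i^r \in {\rm rge}(C) = {\rm rge}(CC^T)$ and $\eta_i = C^T\lambda_i = C^T\lambda_i^r$, Lemma~\ref{cct} combined with the Cauchy--Schwarz inequality gives $c_2\|\lambda_i^r\| \le \|CC^T\lambda_i^r\| = \|C\eta_i\| \le \|C\|\,\|\eta_i\|$, hence $\|\lambda_i^r\| \le \frac{\|C\|}{c_2}\|\eta_i\|$. Substituting, dividing by $1 + \|\eta_1\| + \|\eta_2\|$ and using $\frac{\|\eta_1\|+\|\eta_2\|}{1+\|\eta_1\|+\|\eta_2\|}\le 1$, every admissible quotient is bounded by $\frac{\|C\|}{c_2}\,d$; taking the supremum over all admissible $(z_i,\eta_i)$ delivers the claim. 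The main obstacle is precisely this last normalization step: the pseudo-distance normalizes by $\|\eta_i\| = \|C^T\lambda_i\|$, whereas the natural estimate produces $\|\lambda_i\|$, and these are not comparable unless one first projects $\lambda_i$ onto ${\rm rge}(C)$ and invokes the lower spectral bound $c_2$ from Lemma~\ref{cct}. The hypothesis ${\rm rge}(D)\subseteq{\rm rge}(C)$ is exactly what keeps $b_i$, and hence the entire pairing argument, inside ${\rm rge}(C)$.
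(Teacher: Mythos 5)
Your proof is correct and follows essentially the same route as the paper's: both drop the quadratic term $\langle D(\lambda_1-\lambda_2),\lambda_1-\lambda_2\rangle$ by positive semidefiniteness, estimate $\langle \lambda_1-\lambda_2, b_2-b_1\rangle$ via the normal-cone inequality against a nearby point of the other set, project the multipliers onto ${\rm rge}(C)={\rm rge}(CC^T)$ (the paper's $\nu_i$ are exactly your $\lambda_i^r$), and convert $\|\lambda_i^r\|$ into $\|\eta_i\|$ through the spectral bound $c_2$ of Lemma \ref{cct} before normalizing the quotient. There are no gaps; the only differences from the paper's argument are notational.
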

\begin{proof}  We have 
\baq \nonumber
&&{\rm dis}(G_1,G_2)\\\nonumber
&=&\sup\Big\{ \frac{\langle \eta_1 -\eta_2,z_2-z_1\rangle}{1+\|\eta_1\|+\|\eta_2\|}:  \eta_i\in C^T(N_{K_i}^{-1}+D)^{-1}Cz_i,  i=1,2\Big\}\\\nonumber
&=&\sup\Big\{ \frac{\langle C^T\mu_1 -C^T\mu_2,z_2-z_1\rangle}{1+\|C^T\mu_1\|+\|C^T\mu_2\|}:  \mu_i\in (N_{K_i}^{-1}+D)^{-1}Cz_i, i=1,2\Big\}\\\nonumber
&=&\sup\Big\{ \frac{\langle \mu_1 -\mu_2,Cz_2-Cz_1\rangle}{1+\|C^T\mu_1\|+\|C^T\mu_2\|}:  \mu_i\in N_{K_i}(Cz_i-D\mu_i),  z_i, i=1,2\Big\}\\ \nonumber
&\le&\sup\Big\{ \frac{\langle \mu_1 -\mu_2,(Cz_2-D\mu_2)-(Cz_1-D\mu_1)\rangle}{1+\|C^T\mu_1\|+\|C^T\mu_2\|}:  \mu_i\in N_{K_i}(Cz_i-D\mu_i),  z_i, i=1,2\Big\}, \label{est1}
\eaq
since $D$ is positive semidefinite. 

Let $w_1:={\rm proj}(Cz_2-D \mu_2,K_1\cap {\rm rge}(C))$ and $w_2:={\rm proj}(Cz_1-D\mu_1,K_2\cap {\rm rge}(C))$.  Then we have
\baq\nonumber
&&\langle \mu_1 ,(Cz_2-D\mu_2)-(Cz_1-D\mu_1)\rangle\\
&=&\langle \mu_1 ,Cz_2-D\mu_2-w_1\rangle+\langle \mu_1 ,w_1-(Cz_1-D\mu_1)\rangle\\\nonumber
&\le&\langle \mu_1 ,Cz_2-D\mu_2-w_1\rangle\;\;({\rm using \;the\;property\;of\;normal\;cone})\\\nonumber
&=&\langle \nu_1,Cz_2-D\mu_2-w_1\rangle\;\;\;{\rm where}\; \nu_1:={\rm proj}(\mu_1,  {\rm rge}(CC^T))\\\nonumber
&\le&\|\nu_1\| d_H(K_1\cap{\rm rge}(C)),K_2\cap{\rm rge}(C)))\\
&\le& \frac{\|C\|}{c_2}\|C^T \nu_1\| d_H(K_1\cap{\rm rge}(C)),K_2\cap{\rm rge}(C))) \label{est2x}
\eaq
where  the second equality holds since  $\mu_1-\nu_1\in {\rm ker}(CC^T)={\rm ker}(C^T)$, ${\rm rge(D)\subset rge(C)}$ and the third inequality is satisfied because 
$$
c_2\|\nu_1\|^2\le \langle CC^T \nu_1,\nu_1 \rangle\le \|C\| \|C^T \nu_1\| \|\nu_1\|.
$$
Similarly one has
\beq \label{est3x}
\langle \mu_2 ,(Cz_1-D\mu_1)-(Cz_2-D\mu_2)\rangle\le\frac{\|C\|}{c_2}\|C^T \nu_2 \| d_H(K_1\cap{\rm rge}(C)),K_2\cap{\rm rge}(C))) 
\eeq
  where $\nu_2:={\rm proj}(\mu_2,  {\rm rge}(CC^T)).$
From (\ref{est2x}) and  (\ref{est3x}), one has 
\baq \label{est4}\nonumber
&&\frac{\langle \mu_1 -\mu_2,(Cz_2-D\mu_2)-(Cz_1-D\mu_1)\rangle}{1+\|C^T\mu_1\|+\|C^T\mu_2\|}\\\nonumber
&\le&\frac{\|C\|}{c_2}\frac{\|C^T \nu_1\|+\|C^T \nu_2\|}{1+\|C^T \nu_1\|+\|C^T \nu_2\|} d_H(K_1\cap{\rm rge}(C)),K_2\cap{\rm rge}(C))\\
&\le& \frac{\|C\|}{c_2}  d_H(K_1\cap{\rm rge}(C)),K_2\cap{\rm rge}(C)),
\eaq
and the conclusion follows. \qed
\end{proof}
Now we are ready for the first  main result about the existence, uniqueness of strong solutions and the Lipschitz continuous dependence of solutions on the initial conditions. Let us define the admissible set 
\beq
\mathcal{A}:=\{x_0 \in \R^n:  (N^{-1}_{K(0,x_0)}+D)^{-1}Cx_0\neq \emptyset\}.
\eeq
\begin{theorem}\label{wpmain} (Existence)
Let Assumptions $1, 2, 3, 4, 5$ hold.
Then for each $x_0\in \mathcal{A}$, there exists a   solution $x(\cdot;x_0)$ defined on $[0,T]$ of   problem $(\mathcal{S})$ which is Lipschitz continuous. \end{theorem}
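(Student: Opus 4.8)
The plan is to construct a strong solution as the uniform limit of piecewise-linear interpolants produced by an implicit catching-up (time-stepping) scheme, and to identify the limit by means of the stability of maximal monotone operators under the Vladimirov pseudo-distance. First I would normalize the passivity data: since $P$ is symmetric positive definite, the change of variable $\hat{x}=P^{1/2}x$ (with $\hat{B}=P^{1/2}B$, $\hat{C}=CP^{-1/2}$, $\hat{D}=D$ and $\hat{K}(t,\hat{x})=K(t,P^{-1/2}\hat{x})$) preserves Assumptions 1--5 and turns Assumption 2 into the same statement with $P=I$, so I may assume $P=I$ and use the decomposition of Lemma~\ref{espera} freely. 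Fix $T>0$, set $h=T/N$, $t^N_k=kh$, $x^N_0=x_0$. The scheme I would use is implicit, with the \emph{state argument of the moving set frozen at the previous node} while the $Cx$-argument and $f$ are taken implicitly: given $x^N_k$, choose $(x^N_{k+1},\lambda^N_{k+1})$ with $\lambda^N_{k+1}\in-(N^{-1}_{K(t^N_{k+1},x^N_k)}+D)^{-1}(Cx^N_{k+1})$ and
$$x^N_{k+1}=x^N_k+h\,f(t^N_{k+1},x^N_{k+1})+h\,B\lambda^N_{k+1}.$$
Thus $-\lambda^N_{k+1}\in\Phi(t^N_{k+1},x^N_{k+1},x^N_k)$, and writing $B\Phi=(B-C^T)\Phi^0+C^T\Phi$ this is an inclusion $x+h\,G_{(t^N_{k+1},x^N_k)}(x)\ni x^N_k+hf(t^N_{k+1},x)-h(B-C^T)\Phi^0(t^N_{k+1},x,x^N_k)$, where $G_{(t,y)}:=C^T(N^{-1}_{K(t,y)}+D)^{-1}C$ is maximal monotone (Assumption 4 supplies the range/qualification condition, via Minty's theorem, Proposition~\ref{minty}). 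Unique solvability for $N$ large follows from the passivity estimate: exactly as in the proof of Lemma~\ref{semi} one gets $\langle B(\lambda_1-\lambda_2),x_1-x_2\rangle\ge k\|x_1-x_2\|^2$ for $-\lambda_i\in\Phi(t,x_i,y)$ and some $k<0$, so $x\mapsto x+hB\Phi(t^N_{k+1},x,x^N_k)-hf(t^N_{k+1},x)$ is strongly monotone once $h(|k|+L_f)<1$; combined with the growth and continuity of $\Phi^0$ (Lemma~\ref{Lipschitz}) this gives existence and uniqueness of $x^N_{k+1}$.

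Next I would establish two a priori bounds uniform in $N$. A discrete-energy estimate, testing the scheme against $x^N_{k+1}$ and using the linear growth of $f$ (Lemma~\ref{estif}) and of $\Phi^0$ (Lemma~\ref{Lipschitz}(a)), produces a recursion to which the discrete Gronwall inequality (Lemma~\ref{Gronwalldis}) applies, giving $\|x^N_k\|\le R$ for all $k,N$. The decisive estimate is the uniform bound on the discrete velocity $\|x^N_{k+1}-x^N_k\|\le Lh$: I would compare two consecutive steps, i.e.\ estimate $\langle (x^N_{k+1}-x^N_k)-(x^N_k-x^N_{k-1}),\,x^N_{k+1}-x^N_k\rangle$, using the Lipschitz continuity of $f$ from Assumption 5 for the drift and Lemma~\ref{chypo} (equivalently Lemma~\ref{Lipschitz}(b)) to control the difference of the feedback terms across the two \emph{different} frozen sets $K(t^N_{k+1},x^N_k)$ and $K(t^N_k,x^N_{k-1})$. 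The resulting recursion should close to $\|x^N_{k+1}-x^N_k\|\le(1+C_1h)\|x^N_k-x^N_{k-1}\|+C_2h^2$, whence Lemma~\ref{Gronwalldis} yields the Lipschitz bound; the first velocity $\|x^N_1-x^N_0\|/h$ is bounded because $x_0\in\mathcal{A}$ and $\Phi^0(t^N_1,x^N_1,x_0)$ is bounded by Lemma~\ref{Lipschitz}(a).

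With these bounds I would interpolate: let $x^N(\cdot)$ be the piecewise-linear interpolant and $\theta^N(\cdot),\eta^N(\cdot)$ the piecewise-constant node interpolants. The uniform and equi-Lipschitz bounds give, by Arzel\`a--Ascoli, a subsequence with $x^N\to x$ uniformly on $[0,T]$ (so $x$ is Lipschitz), $\dot{x}^N\rightharpoonup\dot{x}$ weakly in $L^2$, and a weak limit $\lambda$ of the bounded multipliers $\lambda^N$. It remains to pass to the limit in $\dot{x}^N=f(\eta^N,\theta^N)+B\lambda^N$. The single-valued part $(B-C^T)\Phi^0$ converges by the continuity in Lemma~\ref{Lipschitz}(b) and uniform convergence of the arguments, while the multivalued part is handled by Kunze's closedness result (Lemma~\ref{closemm}): since $t^N_{k+1}\to t$, $x^N_k\to x(t)$ and $K$ moves Lipschitz in both variables (Assumption 1), Lemma~\ref{haus} together with Lemma~\ref{hauslem} gives $\mathrm{dis}\big(G_{(t^N_{k+1},x^N_k)},\,C^T(N^{-1}_{K(t,x(t))}+D)^{-1}C\big)\to0$, so the inclusion survives in the limit and $x$ solves $\dot{x}=f(t,x)-B\Phi(t,x,x)$, that is $(\mathcal{S})$.

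The main obstacle is the uniform-in-$N$ velocity bound of the second paragraph. Because $K$ depends on the \emph{state}, the comparison of consecutive steps generates a coupling term proportional to $L_{K2}\|x^N_k-x^N_{k-1}\|$ (from Lemma~\ref{chypo}) that must be absorbed into the left-hand side; this is possible precisely because of the smallness requirement $L_{K2}\le c_2/\|C\|$ in Assumption 1, where $c_2$ is the coercivity constant of $CC^T$ from Lemma~\ref{cct}. Getting this absorption to produce a constant independent of $N$ — and, relatedly, verifying the $\mathrm{dis}$-convergence of the state-dependent operators whose defining sets involve the as-yet-unknown limit $x(\cdot)$ — is the delicate point; everything else is the by-now-standard machinery for perturbed, state-dependent sweeping processes.
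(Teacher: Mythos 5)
Your overall architecture is the paper's: semi-implicit scheme with the moving set frozen at $(t^n_{i+1},x^n_i)$, normalization $P\equiv I$, maximal monotonicity of the frozen operators via passivity, uniform bounds plus Arzel\`a--Ascoli, and identification of the limit through ${\rm dis}$-convergence and Lemma~\ref{closemm}. But the step you yourself flag as delicate --- the uniform velocity bound --- does not close by the mechanism you propose, and this is a genuine gap. If you compare consecutive steps through $\langle (x^N_{k+1}-x^N_k)-(x^N_k-x^N_{k-1}),\,x^N_{k+1}-x^N_k\rangle$ and control the feedback difference with Lemma~\ref{chypo}, the error produced by the change of set is
\[
h\,\bigl(\Vert\lambda^N_{k+1}\Vert+\Vert\lambda^N_k\Vert\bigr)\bigl(L_{K1}h+L_{K2}\Vert x^N_k-x^N_{k-1}\Vert\bigr),
\]
and the multiplier norms here are at best $O(1)$; they are \emph{not} proportional to $\Vert x^N_{k+1}-x^N_k\Vert$. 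Writing $a_k:=\Vert x^N_k-x^N_{k-1}\Vert$, the inequality you obtain has the form $a_{k+1}^2\le a_k a_{k+1}+Ch\,a_{k+1}^2+Ch(h+a_k)$, in which the last term is not paired with a factor $a_{k+1}$ and so cannot be divided out. Solving the quadratic gives at best $a_{k+1}\le a_k+O(h)$, not your claimed recursion $a_{k+1}\le(1+C_1h)a_k+C_2h^2$. An additive $O(h)$ loss per step, accumulated over $T/h$ steps, bounds the increments only by $O(1)$: that is not equi-Lipschitzness, so the Arzel\`a--Ascoli argument of your third paragraph has nothing to bite on. Moreover the smallness $L_{K2}\le c_2/\Vert C\Vert$ never enters this computation, so it cannot ``absorb'' anything into the left-hand side, contrary to what you assert.

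The paper's scheme is built precisely to avoid this loss. It computes each step as a resolvent, $x^n_{i+1}=J^{h_n}_{G_{t^n_{i+1},x^n_i}}(z^n_i)$ with $G_{t,y}=C^T(N^{-1}_{K(t,y)}+D)^{-1}C$, and estimates $\Vert J^{h_n}_{G_{t^n_{i+1},x^n_i}}(x^n_i)-x^n_i\Vert$ by Kunze's resolvent-perturbation inequality (Lemma~\ref{esti2mm}), in which, after Young's inequality, the Vladimirov distance enters \emph{linearly} with coefficient $(1+\delta)$. Combined with Lemma~\ref{haus}, this yields the contraction-type recursion $a_{i+1}\le \tilde{L}_K a_i+O(h_n)$ with $\tilde{L}_K=(1+\delta)L_{K2}\Vert C\Vert/c_2<1$, which iterates geometrically to $a_i=O(h_n)$, i.e.\ the equi-Lipschitz bound. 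This is where the bound on $L_{K2}$ in Assumption~1 actually operates: as the contraction factor of that recursion. Two further, more minor, divergences: the paper evaluates $f$ explicitly at $(t^n_i,x^n_i)$, so each step is a single resolvent evaluation and your strong-monotonicity solvability digression is unnecessary; and in the limit identification the paper applies Lemma~\ref{closemm} not pointwise in $t$ but to the superposition operators $\mathcal{G}_n,\mathcal{G}$ on $L^{2}([0,T];\R^n)$, because the convergence of the velocity terms is only weak in $L^2$ --- your pointwise phrasing of this step would need the same repair.
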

\begin{proof}
From Assumption 2, there exists $\kappa\in \R$ such that $(\kappa I, B, C, D)$ is passive by using Lemma \ref{semi}. 
By using change of variables, without loss of generality, we can suppose that $P\equiv I$, the identity matrix (see, e.g., \cite{ahl,cs}). 
 Let us use the following implicit scheme to approximate (\ref{main1}).\\
 
 Let be given some positive integer $n$. Let $h_n=T/n$ and $t^n_i=ih$ for $0\le i\le n.$ For $0\le i\le n-1$,  we can find the sequence $({x}^n_i)_{0\le i\le n}$ with $x^n_0=x_0$ as follows: 
 \begin{equation}\label{discrete}
\left\{\begin{array}{l}
y^n_i= {x}^n_{i}+h_nf(t^n_i,x^n_i)-h_n\kappa {x}^n_i \\ \\
{x}^n_{i+1}\in y_i^n -h_n F_{t^n_{i+1}, x^n_i}(x^n_{i+1}),
\end{array}\right.
\end{equation}
where $F_{t^n_{i+1},x^n_i}:=-\kappa I+B(N^{-1}_{K(t^n_{i+1},x^n_i)}+D)^{-1}C$ is a maximal monotone operator (see, e.g., \cite{cs,ahl,ahl2}). Then we can compute $x^n_{i+1}$ uniquely as follows
$$
x^n_{i+1}=  (I+h_n F_{t^n_{i+1}, x^n_i})^{-1}(y^n_i)= J^{h_n}_{F_{t^n_{i+1},x^n_i}}(y^n_i)
$$
where $J^\lambda_F$ denotes the {resolvent} of $F$ of index $\lambda$ which is non-expansive. 
Consequently, one can obtain the  algorithm to construct the sequences $(x_i^n)_{i=0}^n$  as follows.

$\mathbf{Algorithm}$

\noindent \texttt{Initialization.} Let $x^n_0:=x_0, y^n_0:=x^n_{0}+h_nf(t^n_0,x^n_0)-h_n\kappa x^n_0.$ \\

\noindent\texttt{Iteration.} For the  current points $x^n_i$ we can compute 
$$
y^n_i:=x^n_{i}+h_nf(t^n_i,x^n_i)-h_n\kappa x^n_i,
 $$
 and 
\beq\label{proje}
x^n_{i+1}:=J^{h_n}_{F_{t^n_{i+1},x^n_i}}(y^n_i).
\eeq
Clearly, the algorithm is well-defined and $x^n_{i+1}\in {\rm dom}(F_{t^n_{i+1},x^n_i})=  {\rm dom}(\Phi(t^n_{i+1},\cdot,x^n_i))$ for  $i=0,..,n-1$. 
On the other hand, using Lemma \ref{espera}, we can rewrite $(\ref{discrete})$ as follows
\baq\nonumber
x^n_{i+1} &\in& {x}^n_{i}+h_nf(t^n_i,x^n_i)+h_n\kappa (x^n_{i+1}-{x}^n_i)\\\nonumber
&&-h_n(B-C^T)\Phi^0(t^n_{i+1}, x^n_{i+1},x^n_i)-h_n G_{t^n_{i+1}, x^n_i}(x^n_{i+1})\\
&\in& z^n_i -h_n G_{t^n_{i+1}, x^n_i}(x^n_{i+1}),
\label{reduceG}
\eaq
where 
$$
z^n_i:= {x}^n_{i}+h_nf(t^n_i,x^n_i)+h_n\kappa (x^n_{i+1}-{x}^n_i)-h_n(B-C^T)\Phi^0(t^n_{i+1},  x^n_{i+1},x^n_i),
$$
and 
$G_{t^n_{i+1}, x^n_i}:=C^T\Phi(t^n_{i+1},  \cdot,x^n_i)C^T=(N^{-1}_{K(t^n_{i+1},x^n_i)}+D)^{-1}C$ is a maximal monotone operator with  ${\rm dom}(G_{t^n_{i+1},x^n_i})={\rm dom}(\Phi(t^n_{i+1},\cdot,x^n_i)$. Therefore, we can also compute the $x^n_{i+1}$ as follows 
\beq\label{computeG}
x^n_{i+1}=(I+h_n G_{t^n_{i+1}, x^n_i})^{-1}(z^n_i)= J^{h_n}_{G_{t^n_{i+1},x^n_i}}(z^n_i).
\eeq
Let us note that
\beq \label{estiG}
\Vert(G_{t, y})^0(x)\Vert \le \Vert C^T\Vert  \Vert\Phi^0(t,x,y)\Vert 
\le   \alpha_2\Vert C^T\Vert (1+ \Vert x\Vert+\Vert y\Vert),
\eeq
where $\alpha_2$ is defined in Lemma \ref{Lipschitz}. From (\ref{computeG}), we have
\baq\nonumber
\Vert x^n_{i+1}-x^n_{i}\Vert&=&\Vert J^{h_n}_{G_{t^n_{i+1},x^n_i}}(z^n_i)-x^n_{i}\Vert \\
&\le&\Vert J^{h_n}_{G_{t^n_{i+1},x^n_i}}(z^n_i)- J^{h_n}_{G_{t^n_{i+1},x^n_i}}(x^n_i) \Vert +\Vert J^{h_n}_{G_{t^n_{i+1},x^n_i}}(x^n_i)-x^n_i\Vert.
\label{est1}
\eaq
Since $J^{h_n}_{G_{t^n_{i+1},x^n_i}}$ is non-expansive, one has
\baq \nonumber
&&\Vert J^{h_n}_{G_{t^n_{i+1},x^n_i}}(z^n_i)- J^{h_n}_{G_{t^n_{i+1},x^n_i}}(x^n_i) \Vert\le \Vert  z^n_i- x^n_i\Vert \\\nonumber
&\le& h_n (\Vert f(t^n_i,x^n_i)\Vert+ \kappa \Vert x^n_{i+1}-x^n_i \Vert  + \Vert B-C^T \Vert  \Vert \Phi^0(t^n_{i+1}, x^n_i, x^n_{i+1})  \Vert  \\\nonumber
&\le& h_n ( \alpha_1(1+\Vert x^n_i \Vert)+\kappa (\Vert x^n_{i+1} \Vert+\Vert x^n_i \Vert)+  \alpha_2\Vert B-C^T \Vert   (1+\Vert x^n_i\Vert+ \Vert x^n_{i+1} \Vert)   \\\label{est2}
&\le &h_n (\alpha_1+ \alpha_2\Vert B-C^T \Vert +\kappa)(1+\Vert x^n_i \Vert+\Vert x^n_{i+1} \Vert).
\eaq
Let us chose some constant $\delta>0$ such that 
\beq\label{lktilde}
\tilde{L}_K:=\frac{(1+\delta)L_{K2} \Vert C \Vert}{c_2}<1.
\eeq
Note that $x^n_i\in {\rm dom}(G_{t^n_{i},x^n_{i-1}})$ for $i=0,..,n-1$ with $x^n_{-1}:=x^n_0$, by using Lemmas \ref{esti2mm}, \ref{haus}, Assumption 1 and (\ref{estiG}) we obtain 
\baq\nonumber
\Vert J^{h_n}_{G_{t^n_{i+1},x^n_i}}(x^n_i)-x^n_i\Vert&\le&  h_n\frac{1+(4\delta+1)\Vert G^0_{t^n_{i},x^n_{i-1}}(x^n_i)\Vert}{4\delta}+(1+\delta){\rm dis}(G_{t^n_{i+1},x^n_i}, G_{t^n_{i},x^n_{i-1}})\\\nonumber
&\le& h_n  \frac{1+(4\delta+1)\alpha_2\Vert C^T\Vert(1+\Vert x^n_i \Vert + \Vert  x^n_{i-1} \Vert)}{4\delta}\\
&+&\frac{(1+\delta) L_{K1}\Vert C \Vert}{c_2}h_n+\frac{(1+\delta)L_{K2} \Vert C \Vert}{c_2} \Vert x^n_i-x^n_{i-1}\Vert).
\label{est3}
\eaq
From (\ref{est1}), (\ref{est2}), (\ref{lktilde}) and (\ref{est3}), we can find some constant $\alpha_4>0$ such that
\baq\nonumber
\Vert x^n_{i+1}-x^n_{i}\Vert &\le& h_n\alpha_4(1+\Vert x^n_{i+1}\Vert +\Vert x^n_i \Vert +\Vert  x^n_{i-1} \Vert) )\\
&+&\tilde{L}_K \Vert x^n_i-x^n_{i-1}\Vert)
\eaq
where $\tilde{L}_K<1.$
Note that $x^n_{-1}:=x^n_0$, therefore we have 
\baq\label{estderi}
\Vert x^n_{i+1}-x^n_{i}\Vert &\le& h_n\alpha_4 \sum_{j=0}^i \tilde{L}^j_K(1+  \Vert x^n_{i-j+1} \Vert +  \Vert x^n_{i-j} \Vert +\Vert x^n_{i-j-1} \Vert )\\
&\le&  h_n\alpha_4(\frac{1}{1-\tilde{L}_K}+ \sum_{j=0}^i \tilde{L}^j_K( \Vert x^n_{i-j+1} \Vert +  \Vert x^n_{i-j} \Vert +\Vert x^n_{i-j-1} \Vert ).\nonumber
\eaq
Consequently 
\baqn
&&\Vert x^n_{i+1}-x^n_{0}\Vert \le  \sum_{j=0}^i  \Vert x^n_{j+1}-x^n_{j}\Vert \\
&\le& {h_n\alpha_4}(\frac{i+1}{ {1-\tilde{L}_K}}+ \Vert x^n_{i+1}\Vert+ 3\sum_{j=0}^i \tilde{L}^j_K\sum_{j=0}^i \Vert x^n_j \Vert)\\
&\le&  \frac{\alpha_4T}{ {1-\tilde{L}_K}}+{h_n\alpha_4}\Vert x^n_{i+1}\Vert +\frac{3h_n\alpha_4}{ {1-\tilde{L}_K}}\sum_{j=0}^i \Vert x^n_j \Vert.
\eaqn
We can choose $n$ large enough such that $h_n\alpha_4<1/2$. Then we  have 
$$
\Vert x^n_{i+1}\Vert\le \beta+\alpha_5h_n\sum_{j=0}^i \Vert x^n_j \Vert,
$$
where
$$
\beta:=2\Vert x_0\Vert+\frac{2\alpha_4T}{ {1-\tilde{L}_K}}, \;\;\alpha_5:=\frac{6\alpha_4}{ {1-\tilde{L}_K}}.
$$
 Using the discrete Gronwall's inequality, one has 
\beq
\Vert x^n_{i+1} \Vert \le M_1:=\beta e^{\alpha_5 T},\;i=0, 1, \ldots,n-1.
\eeq
Combining with (\ref{estderi}), we have  
\beq
\Vert\frac{x^n_{i+1}-x^n_{i}}{h_n}\Vert \le \frac{\alpha_4(1+3M_1)}{1-\tilde{L}_K}:=M_2 .
\eeq
We construct the sequences of functions $(x_n(\cdot))_n,$  $ (\theta_n(\cdot))_n,$ $(\eta_n(\cdot))_n$ on $[0,T]$   as follows:  on $[t^n_i,t^n_{i+1})$ for $0\le i \le n-1$, we set
\beq\label{funx}
x_n(t):=x^n_i+\frac{x^n_{i+1}-x^n_i}{h_n}(t-t^n_i), 
\eeq
and 
\beq\label{funtheta}
\theta_n(t):=t^n_i,\;\;\eta_n(t):=t^n_{i+1}.
 \eeq
Then, for all $t\in (t^n_{i},t^n_{i+1})$
$$\|\dot{x}_n(t)\|=\|\frac{x^n_{i+1}-x^n_i}{h_n}\|\le  M_2,$$
and 
 \beq\label{idapro}
 \sup_{t\in  [0,T]}\{|\theta_n(t)-t|,|\eta_n(t)-t|\}  \le h_n\to 0\; {\rm as}\;\;n\to +\infty.
 \eeq
  Consequently the sequence of functions $\big(x_n(\cdot)\big)_n$ is uniformly bounded and equi-Lipschitz.  Using  Arzel\`a--Ascoli theorem, there exist a Lipschitz function $x(\cdot): [0,T]\to \R^n$  and a subsequence, still denoted by  $\big(x_n(\cdot)\big)_n$, such that  
\begin{itemize}
\item $x_n(\cdot)$ converges strongly to $x(\cdot)$ in $\mathcal{C}([0,T];\R^n)$;
\item $\dot{x}_n(\cdot)$ converges weakly to $\dot{x}(\cdot)$ in $L^{2}([0,T];\R^n)$.
\end{itemize}
 In particular, $x(0)=x_0.$ In addition, from (\ref{reduceG}), (\ref{funx}) and (\ref{funtheta}) we obtain
\baq\nonumber
\dot{x}_n(t)&\in&f_n(t)+\kappa(x_n(\eta_n(t))-x_n(\theta_n(t)))\\\nonumber
&-&(B-C^T)\Phi^0(\eta_n(t),  x_n(\eta_n(t)),x_n(\theta_n(t)))\\
&-& G_{\eta_n(t), x_n(\theta_n(t)}(x_n(\eta_n(t))),
\label{appro}
\eaq
where $f_n(t):=f(\theta_n(t), x_n(\theta_n(t)))$ . We define the operators $ \mathcal{G}, \mathcal{G}_n: L^{2}([0,T];\R^n) \to L^{2}([0,T];\R^n)$ for each positive integer $n$ as follows
$$
w^*\in \mathcal{G}(w) \Leftrightarrow w^*(t)\in G_{(t, x(t))}(w(t)) \;a.e. \; t\in [0,T]
$$
and 
$$
w^*\in \mathcal{G}_n(w) \Leftrightarrow w^*(t)\in G_{\eta_n(t), x_n(\theta_n(t))}(w(t)) \;a.e. \; t\in [0,T].
$$
Using Minty's theorem, we can conclude that  $\mathcal{G}_n, \mathcal{G}$ are maximal monotone operators since for each $t\in [0,T]$, the operators $G_{t, x(t)}$ and $G_{\eta_n(t), x_n(\theta_n(t))}$ are maximal monotone. In addition, one has 
\baqn
&&{\rm dis}( \mathcal{G}_n,  \mathcal{G})\\
&=&\sup\Big\{\frac{  \int_0^T\langle z^*_n(t) -z^*(t),z_n(t)-z(t)\rangle dt}{1+\|z^*_n\|_{L^2}+\|z^*\|_{L^2}}:  z^*_n\in   \mathcal{G}_n(z_n), z^*\in     \mathcal{G}(z)\Big\}\\\nonumber\\
&\le &\sup\Big\{\frac{  \int_0^T{\rm dis}(G_{\eta_n(t), x_n(\theta_n(t))},G_{t,x(t)})(1+\|z^*_n(t) \|+\|z^*(t) \| ) dt}{1+\|z^*_n\|_{L^2}+\|z^*\|_{L^2}}:  z^*_n\in   \mathcal{G}_n(z_n), z^*\in     \mathcal{G}(z)\Big\}\\\nonumber
&& ({\rm \;using \;the\;definition \; of} \;{\rm dis}(G_{\eta_n(t), x_n(\theta_n(t))},G_{t,x(t)}))\\
&\le &\frac{\Vert C\Vert}{c_2}\sup\Big\{\frac{  \int_0^T(L_{K1}\vert\eta_n(t)-t \vert +L_{K_2}\Vert x_n(\theta_n(t)-x(t) \Vert)(1+\|z^*_n(t) \|+\|z^*(t) \|) dt}{1+\|z^*_n\|_{L^2}+\|z^*\|_{L^2}}: \\\nonumber
&& z^*_n\in   \mathcal{G}_n(z_n), z^*\in     \mathcal{G}(z)\Big\}\\\nonumber
&& {\rm (using\;Lemma \;\ref{haus}\; and \;Assumption\; 1)} \\
&\le &\frac{\Vert C\Vert}{c_2}(L_{K1} \Vert\eta_n-I \Vert_{L^2} +L_{K_2}\Vert x_n\circ \theta_n-x \Vert_{L^2})\sup\Big\{ \frac{1+\|z^*_n \|_{L^2}+\|z^* \|_{L^2}}{1+\|z^*_n \|_{L^2}+\|z^* \|_{L^2}}:\\\nonumber
&&z^*_n\in   \mathcal{G}_n(z_n), z^*\in     \mathcal{G}(z)\Big\}\\\nonumber
&= &\frac{\Vert C\Vert}{c_2}(L_{K1} \Vert\eta_n-I \Vert_{L^2} +L_{K_2}\Vert x_n\circ \theta_n-x \Vert_{L^2}) \to 0,
\eaqn
as $n\to +\infty.$

Using Assumption 5, Lemma \ref{Lipschitz} and the fact that $\dot{x}_n$ converges weakly to $\dot{x}$ in $L^{2}([0,T];\R^n)$, we have
$$
\dot{x}_n - f_n-\kappa (x_n \circ \eta_n -x_n \circ \theta_n)+(B-C^T)\Phi^0(\eta_n,  x_n \circ\eta_n, x_n \circ \theta_n)
$$
converges weakly in $L^{2}([0,T];\R^n)$ to 
$$
\dot{x} - f(\cdot,x)+(B-C^T)\Phi^0(\cdot, x, x).
$$
 On the other hand,  $x_n\circ \eta_n$ converges strongly $x$ in $L^{2}([0,T];\R^n)$. Combing with (\ref{appro}) and using Lemma \ref{closemm}, we deduce that 
 \beq
 \dot{x}- f(\cdot,x)+(B-C^T)\Phi^0(\cdot, x, x)\in -\mathcal{G}(x),
 \eeq
 or equivalently
 \beq
 \dot{x}(t)- f(t,x(t))+(B-C^T)\Phi^0(t, x(t), x(t))\in -{G}_{t,x(t)}(x(t)),\;\;a.e.\; t\in [0,T].
 \eeq
 Consequently, one has
 \baq\nonumber
 \dot{x}(t)&\in& f(t,x(t))-(B-C^T)\Phi^0(t, x(t), x(t)) -{G}_{t,x(t)}(x(t))\\\nonumber
 &=&f(t,x(t)) -(B-C^T)\Phi^0(t, x(t), x(t))-C^T(N^{-1}_{K(t,x)}+D)^{-1}Cx(t)\\
 &=&f(t,x(t)) - B(N^{-1}_{K(t,x)}+D)Cx(t),\;\;a.e.\; t\in [0,T],
 \eaq
 and the conclusion follows.
  \qed
\end{proof}
\begin{remark}
(i) Our  discretization method provides a feasible way to study the state-dependent Lur'e dynamical systems for the first time. In addition, it is remarkable that  the obtained solutions are strong.\\
(ii) If $B=C=I, D=0$ then problem $(\mathcal{S})$ becomes the well-known state-dependent sweeping process. Then $c_2=1$ and $L_{K_2}<\frac{c_2}{\Vert C\Vert }=1$, which is accordant with the  result developed in \cite{Kunze1}.   In addition, the authors in \cite{Kunze1} provided some examples to show that the existence of solutions may lack if $L_{K2}\ge 1$ and mentioned that we may not have the uniqueness of solutions even for  $L_{K2}< 1$. So the upper bound of $L_{K2}$ in Assumption 1 is optimal for our existence result.  \\
(iii) However for the uniqueness and Lipschitz dependence of solutions on initial conditions, we can obtain the positive answer thanks to the positive semidefiniteness of $D$, if   the moving set $K$ has a special form, namely it can be decomposed as a sum of a time-dependent moving set and a single-valued Lipschitz function. 
\end{remark}
\noindent$\mathbf{Assumption\; 1'}:$ Suppose that
$$
K(t,x)=K_1(t)+h(t,x),\;\;t\ge 0, x\in \R^n,
$$
where $K_1: [0,+\infty) \rightrightarrows  \R^m$ has non-empty, closed  convex values and $h:  [0,+\infty) \times \R^n  \to {\rm rge}(D+D^T)$ is a single-valued mapping. In addition,  there exist   $L_h, L_{h1}, L_{h2} \ge 0$ such that  for all  $s, t\ge 0$ and $x, y\in \R^n$, we have  \\
$$
{\rm dis}_H(K_1(t),  K_1(s))\le  L_{h1}\vert t-s \vert,
$$
$$
\Vert h(t,x)-h(s,y)\Vert\le  L_{h2}\vert t-s \vert+   L_h \Vert x-y \Vert.
$$
\begin{lemma}\label{norma}
Let Assumption 1' hold. Suppose that 
\beq\nonumber
a_i\in {\rm N}_{K(t,x_i)}(b_i)\;\;\;{\rm for}\; \;(a_i, b_i)\in \R^{2m}, x_i \in \R^n,\;   t_i \ge 0 \; (i=1,2).
\eeq
Then 
\beq 
\langle a_1-a_2, b_1-b_2 \rangle  \ge  \langle a_1-  a_2 ,  h(t,x_1)-h(t,x_2) \rangle.
\eeq
\end{lemma}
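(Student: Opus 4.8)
The plan is to collapse the two normal-cone inclusions, which a priori live over the two distinct state-dependent moving sets $K(t,x_1)$ and $K(t,x_2)$, onto a single fixed convex set $K_1(t)$ by absorbing the single-valued part $h$ as a translation; once this is done the asserted inequality becomes a one-line consequence of monotonicity. The additive decomposition $K(t,x)=K_1(t)+h(t,x)$ granted by Assumption 1' is exactly what makes this reduction possible.

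First I would record the elementary translation rule for normal cones: for any closed convex set $A\subset\R^m$, any $v\in\R^m$, and any point $b$, one has $N_{A+v}(b)=N_A(b-v)$. This is immediate from the definition $N_A(x)=\{x^*:\langle x^*,y-x\rangle\le 0\ \forall y\in A\}$, since writing a generic element of $A+v$ as $y+v$ with $y\in A$ gives $\langle a,(y+v)-b\rangle=\langle a,y-(b-v)\rangle$, so $a$ is normal to $A+v$ at $b$ if and only if it is normal to $A$ at $b-v$ (both cones being empty precisely when $b-v\notin A$).

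Next I would apply this identity with $A=K_1(t)$ and $v=h(t,x_i)$. Since the two inclusions are posted at the common time $t$, the hypothesis $a_i\in N_{K(t,x_i)}(b_i)=N_{K_1(t)+h(t,x_i)}(b_i)$ rewrites as
$$a_i\in N_{K_1(t)}\big(b_i-h(t,x_i)\big),\qquad i=1,2.$$
Now both inclusions refer to the \emph{same} convex set $K_1(t)$, whose normal-cone operator is monotone, whence
$$\big\langle a_1-a_2,\ (b_1-h(t,x_1))-(b_2-h(t,x_2))\big\rangle\ge 0.$$
Distributing the inner product and moving the $h$-terms to the right-hand side yields precisely $\langle a_1-a_2,b_1-b_2\rangle\ge\langle a_1-a_2,h(t,x_1)-h(t,x_2)\rangle$, the claim.

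I do not expect a genuine obstacle here: the whole argument is the translation identity followed by monotonicity at a fixed set, and the Lipschitz constants $L_h,L_{h1},L_{h2}$ of Assumption 1' play no role in this particular estimate (they enter instead in the later quantitative bounds, in the spirit of Lemma \ref{chypo}). The only subtlety worth flagging is that both inclusions must be evaluated at the same time $t$, since this is exactly what lets the single monotone operator $N_{K_1(t)}$ act on both shifted points $b_1-h(t,x_1)$ and $b_2-h(t,x_2)$; had the times differed, the translation trick alone would not suffice and one would have to fall back on the Hausdorff-distance estimate of Lemma \ref{chypo}.
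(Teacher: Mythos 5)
Your proposal is correct and follows essentially the same route as the paper: the paper likewise rewrites $a_i\in N_{K_1(t)+h(t,x_i)}(b_i)$ as $a_i\in N_{K_1(t)}(b_i-h(t,x_i))$ via the translation identity and then invokes monotonicity of $N_{K_1(t)}$ to conclude. Your write-up is in fact slightly more careful, since you prove the translation rule and flag that both inclusions occur at the common time $t$ (the statement's stray ``$t_i\ge 0$'' notwithstanding), which is exactly the point the argument hinges on.
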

\begin{proof}
We have
$$
a_i\in {\rm N}_{K(t,x_i)}(b_i)= {\rm N}_{K_1(t)+f(t,x_i)}(b_i)={\rm N}_{K_1(t)}(b_i-f(t,x_i)).
$$
Since the normal cone of a convex set is monotone, we deduce that
$$
\langle a_1-a_2, b_1-h(t,x_1)-b_2+h(t,x_2) \rangle\ge 0,
$$
and the conclusion follows.
 \qed

\end{proof}

\begin{theorem} (Uniqueness and Lipschitz dependence on initial condition)\label{uniqueness}
Let  Assumptions $1', 2, 3, 4, 5$  hold.
Then for each $x_0\in\R^n$ such that $x_0\in \mathcal{A},$ problem $(\mathcal{S})$ has  a unique  solution $x(\cdot;x_0)$ on $[0,T]$. In addition, the mapping $x_0\mapsto x(\cdot;x_0)$ is Lipschitz continuous.
\end{theorem}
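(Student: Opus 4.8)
The plan is to inherit existence from Theorem \ref{wpmain} and to concentrate on uniqueness together with the Lipschitz estimate, both of which I will extract from a single energy estimate on the difference of two solutions, now made tractable by the additive structure of Assumption $1'$. As in the proof of Theorem \ref{wpmain}, I first use Lemma \ref{semi} to fix $\kappa\in\R$ with $(\kappa I,B,C,D)$ passive and perform the change of variables reducing to $P\equiv I$. Fix $x_0^1,x_0^2\in\mathcal{A}$ and let $x^1,x^2$ be corresponding Lipschitz solutions, so that $\dot x^i(t)=f(t,x^i(t))-B\mu_i(t)$ a.e., where $\mu_i(t)\in N_{K(t,x^i(t))}(Cx^i(t)-D\mu_i(t))$; writing $K(t,x)=K_1(t)+h(t,x)$, this reads $\mu_i(t)\in N_{K_1(t)}(Cx^i(t)-D\mu_i(t)-h(t,x^i(t)))$. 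I set $\Delta x=x^1-x^2$, $\Delta\mu=\mu_1-\mu_2$, $\Delta h=h(t,x^1)-h(t,x^2)$, and $\Delta p=\Phi^0(t,x^1,x^1)-\Phi^0(t,x^2,x^2)$, the difference of minimal-norm multipliers (recall from the proof of Lemma \ref{Lipschitz} that the image-part of every selection equals $\Phi^0\in{\rm rge}(D+D^T)$, so $\Delta p$ is exactly the projection of $\Delta\mu$ onto ${\rm rge}(D+D^T)$). Since $x^1,x^2$ are Lipschitz, $t\mapsto\tfrac12\|\Delta x(t)\|^2$ is absolutely continuous with a.e. derivative $\langle f(t,x^1)-f(t,x^2),\Delta x\rangle-\langle B\Delta\mu,\Delta x\rangle$.

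The core is to bound $-\langle B\Delta\mu,\Delta x\rangle$ from above. Decomposing $B=(B-C^T)+C^T$ and using that both $B-C^T$ and $D$ annihilate ${\rm ker}(D+D^T)={\rm ker}(D)$ (Assumption $2$ with $P\equiv I$), I have $(B-C^T)\Delta\mu=(B-C^T)\Delta p$ and $D\Delta\mu=D\Delta p$. Applying the passivity inequality of Definition \ref{passi} for $(\kappa I,B,C,D)$ at the pair $(\Delta x,-\Delta p)$ yields $\langle(B-C^T)\Delta p,\Delta x\rangle\ge\kappa\|\Delta x\|^2-\langle D\Delta p,\Delta p\rangle$, while Lemma \ref{norma} (monotonicity of $N_{K_1(t)}$ for the \emph{same} set $K_1(t)$ shared by both solutions, which is precisely what Assumption $1'$ buys) gives $\langle\Delta\mu,C\Delta x\rangle\ge\langle D\Delta p,\Delta p\rangle+\langle\Delta p,\Delta h\rangle$. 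Adding the two and telescoping the $\langle D\Delta p,\Delta p\rangle$ terms, I expect the clean estimate $-\langle B\Delta\mu,\Delta x\rangle\le-\kappa\|\Delta x\|^2-\langle\Delta p,\Delta h\rangle$.

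The decisive step, and the main obstacle, is to dominate $\langle\Delta p,\Delta h\rangle$ by $\|\Delta x\|^2$. In the general setting the only available estimate is the Hölder-type bound of Lemma \ref{Lipschitz}(b), namely $\|\Delta p\|^2\lesssim\|\Delta x\|^2+\|\Delta x\|$, which through Lemma \ref{gronwall} would produce a $w^{3/4}$ term and thus fail to force uniqueness. The additive structure repairs this: since $p_i:=\Phi^0(t,x^i,x^i)\in{\rm rge}(D+D^T)$ satisfies $p_i\in N_{K_1(t)}(Cx^i-Dp_i-h(t,x^i))$ with the same set $K_1(t)$, monotonicity gives $\langle\Delta p,\,C\Delta x-D\Delta p-\Delta h\rangle\ge0$, whence $c_1\|\Delta p\|^2\le\langle D\Delta p,\Delta p\rangle\le\langle\Delta p,C\Delta x-\Delta h\rangle\le\|\Delta p\|(\|C\|+L_h)\|\Delta x\|$, using Lemma \ref{estiD} (legitimate because $\Delta p\in{\rm rge}(D+D^T)$) and the $L_h$-Lipschitz continuity of $h$. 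This yields the genuine Lipschitz bound $\|\Delta p\|\le\frac{\|C\|+L_h}{c_1}\|\Delta x\|$, and therefore $-\langle\Delta p,\Delta h\rangle\le\frac{L_h(\|C\|+L_h)}{c_1}\|\Delta x\|^2$.

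Combining this with the $L_f$-Lipschitz bound on $f$ from Assumption $5$, I obtain $\frac{d}{dt}\tfrac12\|\Delta x(t)\|^2\le M\|\Delta x(t)\|^2$ a.e., with $M:=L_f-\kappa+\frac{L_h(\|C\|+L_h)}{c_1}$. A standard linear Gronwall argument then gives $\|x(t;x_0^1)-x(t;x_0^2)\|\le e^{MT}\|x_0^1-x_0^2\|$ for all $t\in[0,T]$. Taking $x_0^1=x_0^2$ forces $\Delta x\equiv0$, i.e. uniqueness, and the same inequality is exactly the asserted Lipschitz dependence of the map $x_0\mapsto x(\cdot;x_0)$. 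Existence being supplied by Theorem \ref{wpmain}, this completes the proof.
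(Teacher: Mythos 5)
Your proposal is correct, and its skeleton is the same as the paper's: existence inherited from Theorem \ref{wpmain}, then a Gronwall estimate on $\tfrac12\|x_1-x_2\|^2$ whose engine is the monotonicity of the \emph{single} normal cone $N_{K_1(t)}$ shared by both solutions (Lemma \ref{norma}), the coercivity of $D$ on ${\rm rge}(D+D^T)$ (Lemma \ref{estiD}), and the hypothesis ${\rm rge}(h)\subset{\rm rge}(D+D^T)$. The execution differs in the middle step. The paper never introduces $\kappa$ or $\Phi^0$ here: it works directly with the multipliers $y_i$ and their projections $y_i^{im}$, gets $\langle B\Delta y,\Delta x\rangle\ge c_1\|\Delta y^{im}\|^2-(L_h+\|B-C^T\|)\|\Delta y^{im}\|\|\Delta x\|$ by writing $B=C^T+(B-C^T)$, and kills the cross term in one stroke by completing the square ($a^2+b^2\ge 2ab$), arriving at the constant $\gamma=L_f+\frac{(L_h+\|B-C^T\|)^2}{4c_1}$. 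You instead handle $(B-C^T)$ via the passivity inequality with $\kappa$, and separately prove the genuinely Lipschitz bound $\|\Delta p\|\le\frac{\|C\|+L_h}{c_1}\|\Delta x\|$ on the minimal-norm multipliers before estimating $-\langle\Delta p,\Delta h\rangle$, arriving at $M=L_f-\kappa+\frac{L_h(\|C\|+L_h)}{c_1}$; both constants do the job, and your route has the small bonus of exhibiting Lipschitz dependence of the multiplier itself on the state, which the paper's completing-the-square argument leaves implicit. One imprecision you should repair: the parenthetical claim ${\rm ker}(D+D^T)={\rm ker}(D)$ is false for non-symmetric positive semidefinite $D$ (take $D$ skew-symmetric and nonzero; then $D+D^T=0$ while ${\rm ker}(D)=\{0\}$), so the assertion $D\Delta\mu=D\Delta p$ is unjustified. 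Fortunately you never actually need it: the facts your argument uses are $(B-C^T)\Delta\mu=(B-C^T)\Delta p$, which follows from Assumption $2$, and the identities $\langle D\Delta\mu,\Delta\mu\rangle=\langle D\Delta p,\Delta p\rangle$ and $\langle\Delta\mu,\Delta h\rangle=\langle\Delta p,\Delta h\rangle$, which follow from the symmetry of $D+D^T$ and the orthogonality ${\rm ker}(D+D^T)\perp{\rm rge}(D+D^T)$ (exactly as in the paper's own proof), so the argument stands once that phrase is corrected.
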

\begin{proof}
\noindent It is easy to see that Assumption 1' implies Assumption 1, so the existence of solutions is obtained.  Now, let $x_i$ be a solution of $(\mathcal{S})$ with the  initial condition $x_i(0)=x_{i0}, \;i= 1, 2$. We have 

\begin{equation}\label{uni}
\left\{\begin{array}{l}
\dot{x}_i(t)= f(t,x_i(t))-By_i(t),\\ \\
y_i(t) \in (N^{-1}_{K(t,x_i(t))}+D)^{-1}(Cx_i(t)),\;\;a.e.\;t\in [0,T].
\end{array}\right.
\end{equation}
The inclusion in (\ref{uni}) is  equivalent to 
\baqn
y_i(t) \in N_{K(t,x_i(t))}(Cx_i(t)-Dy_i(t)).
\eaqn
 Using  Lemma \ref{norma}, we obtain that 
\baqn
&&\langle y_1(t)-y_2(t), (Cx_1(t)-Dy_1(t)) -(Cx_2(t)-Dy_2(t)) \rangle\\
&\ge & \langle y_1(t)-y_2(t), h(t,x_1(t))-h(t,x_2(t)) \rangle \\
&=& \langle y^{im}_1(t)-y^{im}_2(t), h(t,x_1(t))-h(t,x_2(t)) \rangle \;({\rm since}\;{\rm rge}(h)\subset {\rm rge}(D+D^T))\\
&\ge & -L_h \Vert  y^{im}_1(t)-y^{im}_2(t) \Vert \Vert x_1(t)-x_2(t) \Vert,
\eaqn 
where $y^{im}$ denotes the projection of $y$ onto ${\rm rge}(D+D^T)$. 
Therefore
\baq\nonumber 
&&\langle y_1(t)-y_2(t), Cx_1(t)-Cx_2(t)) \rangle \\\nonumber
&\ge&  \langle y_1(t)-y_2(t), Dy_1(t)-Dy_2(t) \rangle -L_h \Vert  y^{im}_1(t)-y^{im}_2(t) \Vert \Vert x_1(t)-x_2(t) \Vert\\
&\ge& c_1  \Vert y^{im}_1(t)-y^{im}_2(t) \Vert^2 -L_h \Vert  y^{im}_1(t)-y^{im}_2(t) \Vert \Vert x_1(t)-x_2(t) \Vert,
\label{estirela}
\eaq
 where $c_1>0$ is defined in Lemma \ref{estiD}.  Hence
\baqn
&&\langle By_1(t)-By_2(t), x_1(t)-x_2(t) \rangle\\
&=&\langle y_1(t)-y_2(t), (Cx_1(t)-Cx_2(t)) \rangle + \langle (B-C^T)(y_1(t)-y_2(t),  x_1(t)-x_2(t) \rangle\\
&\ge &  c_1  \Vert y^{im}_1(t)-y^{im}_2(t) \Vert^2 -L \Vert  y^{im}_1(t)-y^{im}_2(t) \Vert \Vert x_1(t)-x_2(t) \Vert\\
&&({\rm where}\; L:=L_h+ \Vert B-C^T \Vert)\\
&\ge & \frac{-L^2}{4c_1} \Vert x_1(t)-x_2(t) \Vert ^2\;({\rm use\;the\;inequality\;} a^2+b^2\ge 2ab,\;\forall \;a, b \in \R).
\eaqn 
Consequently, we have
\baqn
\frac{d}{dt}  \frac{1}{2}\Vert x_1(t)-x_2(t) \Vert ^2&=&\langle  \dot{x}_1(t)-\dot{x}_2(t),  x_1(t)-x_2(t) \rangle\\
&=&   \langle f(t,x_1(t))-f(t,x_2(t)) - (By_1(t)-By_2(t)),  x_1(t)-x_2(t) \rangle\\
&\le & (L_f+ \frac{L^2}{4c_1}) \Vert  x_1(t)-x_2(t) \Vert ^2=\gamma \Vert  x_1(t)-x_2(t) \Vert ^2,
\eaqn
where $\gamma:=L_f+ \frac{L^2}{4c_1}$.
Using Gronwall's inequality, we obtain that 
$$
\Vert x_1(t)-x_2(t) \Vert \le \Vert x_1(0)-x_2(0) \Vert e^{\gamma t}\le  \Vert x_{10}-x_{20} \Vert e^{\gamma T}, \forall \; t\in [0,T],
$$
 and the conclusion follows.\qed
\end{proof}
\begin{remark}
Since $T>0$ is arbitrary, one can define the unique solution $x(\cdot;x_0)$ of problem $(\mathcal{S})$ on $[0,+\infty)$. Now we are interested in the asymptotic behaviour of the problem $(\mathcal{S})$, i.e., the behaviour of solutions when the time is large. 
\end{remark}
\begin{theorem} (Globally exponential attractivity)
Let all the assumptions of Theorem \ref{uniqueness} hold. In addition, suppose that 
\beq
\langle f(t,x), x \rangle \le - \sigma \Vert x \Vert^2, \;\;0\in  h(t,0)+  K_1(t)=K(t,0),\;\;\forall t\ge 0, x\in \R^n,
\eeq
for some $\sigma > \frac{(L_h+ \Vert B-C^T \Vert)^2}{4c_1}$. 
Then the unique solution $x(\cdot)$ of $(\mathcal{S})$ starting at a given point $x_0$ exponentially converges to the origin when the time is large, i.e., 
$$
\Vert x(t) \Vert \le e^{-\delta t} \Vert x_0 \Vert \to 0\;\;{\rm as}\;\; t\to +\infty,
$$
where $\delta:=\sigma - \frac{(L_h+ \Vert B-C^T \Vert)^2}{4c_1}>0$.
\end{theorem}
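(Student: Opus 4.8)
The plan is to use the Lyapunov-type function $V(t):=\tfrac{1}{2}\Vert x(t)\Vert^2$ and to show that $V'(t)\le -2\delta V(t)$ for almost every $t$, after which the claimed bound $\Vert x(t)\Vert\le e^{-\delta t}\Vert x_0\Vert$ follows at once from Gronwall's inequality (Lemma \ref{gronwall} with $\alpha=0$, $a(t)\equiv -2\delta$, $b\equiv 0$), since $V$ is absolutely continuous and nonnegative because $x(\cdot)$ is Lipschitz by Theorem \ref{uniqueness}. Following the normalization $P\equiv I$ used in the proofs of Theorems \ref{wpmain} and \ref{uniqueness}, I would write the solution as $\dot{x}(t)=f(t,x(t))-By(t)$ with $y(t)\in N_{K(t,x(t))}(Cx(t)-Dy(t))$ exactly as in \eqref{uni}, so that $V'(t)=\langle f(t,x(t)),x(t)\rangle-\langle By(t),x(t)\rangle$. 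The dissipativity hypothesis already controls the first term by $-\sigma\Vert x(t)\Vert^2$, so the whole argument reduces to producing a good lower bound for $\langle By(t),x(t)\rangle$.

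The decisive new ingredient, replacing the two-solution comparison of Theorem \ref{uniqueness} by a comparison of the trajectory against the equilibrium, is the standing hypothesis $0\in K(t,0)$. Under Assumption $1'$ we have $K(t,0)=K_1(t)+h(t,0)$, so $0\in K(t,0)$ means $-h(t,0)\in K_1(t)$, whence the point $z_0:=h(t,x(t))-h(t,0)$ lies in $K_1(t)+h(t,x(t))=K(t,x(t))$. Feeding this admissible point into the normal-cone inequality $\langle y(t),\,z_0-(Cx(t)-Dy(t))\rangle\le 0$ and rearranging gives
\begin{equation*}
\langle y(t),Cx(t)\rangle\ \ge\ \langle Dy(t),y(t)\rangle+\langle y(t),h(t,x(t))-h(t,0)\rangle .
\end{equation*}
I would bound the first term below by $c_1\Vert y^{im}(t)\Vert^2$ via Lemma \ref{estiD} (here $y^{im}$ is the projection of $y$ onto $\mathrm{rge}(D+D^T)$, and $\langle Dy,y\rangle=\langle Dy^{im},y^{im}\rangle$), and the second term below by $-L_h\Vert y^{im}(t)\Vert\,\Vert x(t)\Vert$, using $\mathrm{rge}(h)\subset\mathrm{rge}(D+D^T)$ together with the Lipschitz estimate $\Vert h(t,x(t))-h(t,0)\Vert\le L_h\Vert x(t)\Vert$ from Assumption $1'$.

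Next I would split $\langle By(t),x(t)\rangle=\langle y(t),Cx(t)\rangle+\langle(B-C^T)y(t),x(t)\rangle$ and, exactly as in Lemma \ref{espera}, use $\ker(D+D^T)\subset\ker(B-C^T)$ to replace $(B-C^T)y$ by $(B-C^T)y^{im}$, so that $\vert\langle(B-C^T)y(t),x(t)\rangle\vert\le\Vert B-C^T\Vert\,\Vert y^{im}(t)\Vert\,\Vert x(t)\Vert$. Writing $L:=L_h+\Vert B-C^T\Vert$ and collecting the estimates yields $\langle By(t),x(t)\rangle\ge c_1\Vert y^{im}(t)\Vert^2-L\Vert y^{im}(t)\Vert\,\Vert x(t)\Vert$, and completing the square (equivalently $c_1a^2-Lab\ge-\tfrac{L^2}{4c_1}b^2$) gives $\langle By(t),x(t)\rangle\ge-\tfrac{L^2}{4c_1}\Vert x(t)\Vert^2$. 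Combining with $\langle f(t,x(t)),x(t)\rangle\le-\sigma\Vert x(t)\Vert^2$ produces
\begin{equation*}
V'(t)\ \le\ -\Big(\sigma-\tfrac{L^2}{4c_1}\Big)\Vert x(t)\Vert^2\ =\ -2\delta\,V(t),
\end{equation*}
and Gronwall's inequality closes the argument.

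The main obstacle, and the only genuinely new step relative to Theorem \ref{uniqueness}, is the construction of the admissible test point $z_0$ from the hypothesis $0\in K(t,0)$: everything downstream is the same quadratic-completion and Gronwall bookkeeping already used for uniqueness, now applied to a single trajectory measured against the origin rather than to the difference of two trajectories. A secondary point to verify is that the reduction to $P\equiv I$ is compatible with the dissipativity hypothesis $\langle f(t,x),x\rangle\le-\sigma\Vert x\Vert^2$; if one prefers to avoid this, the identical estimate can be carried out with the weighted Lyapunov function $\tfrac{1}{2}\langle Px,x\rangle$, which is where the precise value of $\delta$ must ultimately be justified.
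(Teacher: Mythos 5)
Your proposal is correct and follows essentially the same route as the paper's own proof: the same normal-cone test point (you construct $z_0=h(t,x(t))-h(t,0)\in K(t,x(t))$ directly, while the paper equivalently translates to $N_{K_1(t)}$ and tests against $-h(t,0)\in K_1(t)$), the same bounds $\langle Dy,y\rangle\ge c_1\Vert y^{im}\Vert^2$ and $-L_h\Vert y^{im}\Vert\,\Vert x\Vert$, the same splitting of $\langle By,x\rangle$ using $\ker(D+D^T)\subset\ker(B-C^T)$, and the same quadratic completion yielding $\frac{d}{dt}\tfrac{1}{2}\Vert x(t)\Vert^2\le-\delta\Vert x(t)\Vert^2$ followed by Gronwall. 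Your closing caveat about compatibility of the $P\equiv I$ normalization with the dissipativity hypothesis is a fair point that the paper itself glosses over, but it does not alter the argument.
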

\begin{proof}
The unique solution $x(\cdot)$ satisfies 
$$
\dot{x}(t)= f(t,x(t))-By(t), \; y(t)\in (N^{-1}_{K(t,x(t))}+D)^{-1}(Cx(t)),\;\;a.e.\;t\ge 0.
$$
Then 
\baqn
y(t) &\in& N_{K(t,x(t))}(Cx(t)-Dy(t))=N_{K_1(t)+h(t,x(t))}(Cx(t)-Dy(t))\\
&=&N_{K_1(t)}(Cx(t)-Dy(t)-h(t,x(t))).
\eaqn
Since $ -h(t,0)\in  K_1(t)$, we have 
$$
\langle y(t), Cx(t)-Dy(t)-h(t,x(t))+h(t,0) \rangle\ge 0.
$$
Thus 
\baqn
\langle y(t), Cx(t) \rangle&\ge& \langle y(t),Dy(t)\rangle+\langle y(t),h(t,x(t))-h(t,0)\rangle\\
&\ge& c_1 \Vert y^{im}(t)\Vert^2+ \langle y^{im}(t),h(t,x(t))-h(t,0)\rangle\;\;({\rm since}\;{\rm rge}(h)\subset {\rm rge}(D+D^T))\\
&\ge& c_1 \Vert y^{im}(t)\Vert^2-L_h\Vert y^{im}(t)\Vert \Vert x(t)\Vert.
\eaqn
Note that
\baqn
\frac{d}{dt}(\frac{1}{2} \Vert x(t) \Vert^2)&=& \langle \dot{x}(t), x(t) \rangle= \langle f(t,x(t))-By(t), x(t) \rangle\\
&\le& -\sigma \Vert x(t)\Vert^2 - \langle (B-C^T)y(t), x(t) \rangle- \langle y(t), Cx(t) \rangle\\
&\le& -\sigma \Vert x(t)\Vert^2 - \langle (B-C^T)y^{im}(t), x(t) \rangle + L_h\Vert y^{im}(t)\Vert \Vert x(t)\Vert-c_1 \Vert y^{im}(t)\Vert^2\\
&\le& -\sigma \Vert x(t)\Vert^2 - (\Vert B-C^T\Vert +L_h) \Vert \Vert y^{im}(t)\Vert \Vert x(t)\Vert-c_1 \Vert y^{im}(t)\Vert^2\\
&\le& -\delta \Vert x(t)\Vert^2.
\eaqn
Therefore
$$
\frac{d}{dt}(e^{2\delta t}\Vert x(t) \Vert^2) \le 0,
$$
which implies that
$$
\Vert x(t) \Vert\le e^{-\delta t} \Vert x_0 \Vert \to 0,\;\;{\rm as}\;\; t\to +\infty.
$$
\qed
\end{proof}
\begin{remark}
If only all the assumptions of Theorem \ref{wpmain} are satisfied, we may not have the uniqueness of solutions. However if the moving set always contains the origin, then all solutions starting at a given point $x_0\in \mathcal{A}$ also tend to zeros when the time is large. 
\end{remark}
\begin{theorem} (Globally exponential attractivity without uniqueness)\label{attractivity}
Suppose that all the assumptions of Theorem \ref{wpmain} are satisfied. Furthermore, assume that 
\beq
\langle f(t,x), x \rangle \le - \sigma \Vert x \Vert^2, \;\;0\in  K(t,x),\;\;\forall t\ge 0, x\in \R^n,
\eeq
for some $\sigma > \frac{ \Vert B-C^T \Vert^2}{4c_1}$. 
Then any solution  $x(\cdot)$ of $(\mathcal{S})$ starting at a given point $x_0$ exponentially converges to the origin when the time is large, i.e., 
$$
 \Vert x(t) \Vert \le e^{-\delta t} \Vert x_0 \Vert \to 0\;\;{\rm as}\;\; t\to +\infty,
$$
where $\delta:=\sigma - \frac{ \Vert B-C^T \Vert^2}{4c_1}>0$.
\end{theorem}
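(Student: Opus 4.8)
The plan is to run the same Lyapunov/energy argument as in the preceding attractivity theorem, differentiating $\tfrac12\Vert x(t)\Vert^2$ along a solution, but to replace the use of Lemma~\ref{norma} (which rested on the decomposition $K=K_1+h$ from Assumption~$1'$) by a direct exploitation of the new hypothesis $0\in K(t,x)$. First I would recall that along any solution the inclusion $y(t)\in(N^{-1}_{K(t,x(t))}+D)^{-1}(Cx(t))$ is equivalent to $y(t)\in N_{K(t,x(t))}(Cx(t)-Dy(t))$. Since $0\in K(t,x(t))$, evaluating the defining inequality of the normal cone at the admissible point $0$ yields $\langle y(t),\,Cx(t)-Dy(t)\rangle\ge 0$, hence $\langle y(t),Cx(t)\rangle\ge\langle y(t),Dy(t)\rangle$.

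Next, writing $y(t)=y^{im}(t)+y^{ker}(t)$ with $y^{im}(t)$ the orthogonal projection of $y(t)$ onto $\mathrm{rge}(D+D^T)$, positive semidefiniteness of $D$ (Lemma~\ref{estiD}, constant $c_1$) gives $\langle y(t),Dy(t)\rangle=\langle Dy^{im}(t),y^{im}(t)\rangle\ge c_1\Vert y^{im}(t)\Vert^2$, so that $\langle y(t),Cx(t)\rangle\ge c_1\Vert y^{im}(t)\Vert^2$. I would then differentiate, using $\langle By,x\rangle=\langle y,Cx\rangle+\langle(B-C^T)y,x\rangle$ together with the kernel inclusion $\ker(D+D^T)\subset\ker(B-C^T)$ (from Assumption~2 after the standard reduction $P\equiv I$) to replace $(B-C^T)y$ by $(B-C^T)y^{im}$. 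Combining these with the dissipativity bound $\langle f(t,x),x\rangle\le-\sigma\Vert x\Vert^2$ produces
$$\frac{d}{dt}\Big(\tfrac12\Vert x(t)\Vert^2\Big)\le-\sigma\Vert x(t)\Vert^2-c_1\Vert y^{im}(t)\Vert^2+\Vert B-C^T\Vert\,\Vert y^{im}(t)\Vert\,\Vert x(t)\Vert.$$

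Finally I would absorb the cross term by completing the square: as a quadratic in $\Vert y^{im}(t)\Vert$, the last two terms are maximized at value $\tfrac{\Vert B-C^T\Vert^2}{4c_1}\Vert x(t)\Vert^2$, which is exactly where the standing hypothesis $\sigma>\tfrac{\Vert B-C^T\Vert^2}{4c_1}$ is needed to guarantee $\delta>0$. One thereby obtains $\frac{d}{dt}\big(\tfrac12\Vert x(t)\Vert^2\big)\le-\delta\Vert x(t)\Vert^2$, hence $\frac{d}{dt}\big(e^{2\delta t}\Vert x(t)\Vert^2\big)\le 0$, and integrating gives the claimed decay $\Vert x(t)\Vert\le e^{-\delta t}\Vert x_0\Vert$. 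The only genuine difference from the previous attractivity theorem — and the step I expect to require care — is the elimination of the feedback variable: without Assumption~$1'$ it is the hypothesis $0\in K(t,x)$, combined with the passivity kernel inclusion, that lets one control both $\langle y,Cx\rangle$ and $\langle(B-C^T)y,x\rangle$ purely through $\Vert y^{im}\Vert$, so that the $y$-dependence cancels cleanly. Since no uniqueness is invoked anywhere in the estimate, the conclusion holds for every solution emanating from the given $x_0\in\mathcal{A}$.
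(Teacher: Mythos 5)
Your proposal is correct and follows essentially the same route as the paper's own proof: the normal-cone inequality at the admissible point $0\in K(t,x(t))$ to get $\langle y,Cx\rangle\ge\langle y,Dy\rangle\ge c_1\Vert y^{im}\Vert^2$, the kernel inclusion $\ker(D+D^T)\subset\ker(B-C^T)$ (with $P\equiv I$) to reduce $(B-C^T)y$ to $(B-C^T)y^{im}$, and completion of the square to absorb the cross term and obtain $\frac{d}{dt}\bigl(e^{2\delta t}\Vert x(t)\Vert^2\bigr)\le 0$. Your write-up in fact states the cross-term bound with the correct (positive) sign, silently fixing a sign typo in the paper's displayed chain of inequalities.
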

\begin{proof}
Similarly as in the proof of Theorem \ref{attractivity}, we know that for almost $t\ge 0$, one has 
$$
\dot{x}(t)= f(t,x(t))-By(t), 
$$
where 
$$
y(t) \in N_{K(t,x(t))}(Cx(t)-Dy(t)).
$$
The fact $0\in  K(t,x(t))$ deduces that 
$$
\langle y(t), 0-Cx(t)+Dy(t) \rangle\le 0.
$$
Thus 
$$
\langle y(t),Cx(t)\rangle\ge \langle y(t),Dy(t) \rangle\ge c_1  \Vert y^{im}(t)\Vert^2.
$$
Therefore 
\baqn
\frac{d}{dt}(\frac{1}{2} \Vert x(t) \Vert^2)&=& \langle \dot{x}(t), x(t) \rangle= \langle f(t,x(t))-By(t), x(t) \rangle\\
&\le& -\sigma \Vert x(t)\Vert^2 - \langle (B-C^T)y(t), x(t) \rangle- \langle y(t), Cx(t) \rangle\\
&\le& -\sigma \Vert x(t)\Vert^2 - \langle (B-C^T)y^{im}(t), x(t) \rangle -c_1 \Vert y^{im}(t)\Vert^2\\
&\le& -\sigma \Vert x(t)\Vert^2 - (\Vert B-C^T\Vert) \Vert \Vert y^{im}(t)\Vert \Vert x(t)\Vert-c_1 \Vert y^{im}(t)\Vert^2\\
&\le& -\delta \Vert x(t)\Vert^2.
\eaqn
Consequently, we have 
$$
 \Vert x(t) \Vert \le e^{-\delta t} \Vert x_0\Vert, \;\;\forall t\ge 0,
$$
and the conclusion follows. \qed
\end{proof}
\section{Application for studying time-varying Lur'e system with errors in data} \label{s4}
For simplicity, we consider the function $f$ as some  matrix $A$.
Suppose that  the matrices $ A, B, C, D$ and the time-varying set $K$ satisfy all the assumptions of Theorem 1. Then problem $(\mathcal{S})$ has  a unique solution.  However, assume that there are errors in measure for the matrices $A$ and $C$, i.e., we have the approximate matrices $\bar{A}, \bar{C}$ and we want to know whether the following system 
\begin{subequations}
\label{eq:tot}
\begin{empheq}[left={(\bar{{\mathcal S}})}\empheqlbrace]{align}
  & \dot{x}(t) = \bar{A}x(t)+{B}\lambda(t)\; {\rm a.e.} \; t \in [0,+\infty); \label{1a}\\
  & y(t)=\bar{C}x(t)+{D}\lambda(t),\\
  & \lambda(t) \in -N_{K(t)}(y(t)), \;t\ge 0;\\
  & x(0) = x_0,
\end{empheq}
\end{subequations}
has a solution. Generally, $(\bar{A}, B, \bar{C}, D, K)$ may not satisfy Assumptions $(A_1)-(A_5)$ so we can not apply the result in \cite{abc}. Let us show that we can use our result to answer this question. Indeed, we can rewrite $(\bar{{\mathcal S}})$ as follows
\begin{subequations}
\label{eq:tot}
\begin{empheq}[left={(\bar{{\mathcal S}})}\empheqlbrace]{align}
  & \dot{x}(t) = \bar{A}x(t)+{B}\lambda(t)\; {\rm a.e.} \; t \in [0,+\infty); \label{1a}\\
  & \bar{y}(t)=Cx(t)+{D}\lambda(t),\\
  & \lambda(t) \in -N_{K(t)}(\bar{y}(t)+(\bar{C}-C)x(t))=-N_{\bar{K}(t,x(t))}(\bar{y}(t)), \;t\ge 0;\\
  & x(0) = x_0,
\end{empheq}
\end{subequations}
where $\bar{K}(t,x)=K(t)-(\bar{C}-C)x$. Then the systems $(\bar{A}, B, {C}, D, K)$  satisfies all the assumptions of Theorem 1. Consequently, $(\bar{{\mathcal S}})$ has a solution defined on $[0,+\infty)$. If $ \bar{C}=C+\varepsilon(D+D^T)$ for some $\varepsilon>0$ small enough, then the solution is unique by using Theorem 2. In addition if $0\in K(t)$ for all $t\ge 0$, and $ \bar{A}\le -\sigma I$, i.e.,
$$
\langle \bar{A}x, x \rangle \le -\sigma \Vert x \Vert^2,\;\; \forall x\in \R^n,
$$
 for $\sigma>0$ is large enough, then the unique solution of $(\bar{{\mathcal S}})$ converges to the origin at exponential rate (Theorem 3).
 \begin{example}
 Let us consider 
 $$A=-\sigma I_2, B=D=\left( \begin{array}{cc}
0 &\;\; 0\\ \\
0 & \;\; 1
\end{array} \right),C=B+\varepsilon I_2,\;\;\;{K(t)=[f_1(t),+\infty)\times [f_2(t),+\infty)}
 $$
 \end{example}
for some $\sigma, \varepsilon>0$ where $f_1, f_2: [0,+\infty) \to \R$ are two absolutely continuous functions. Then there no exists a  positive symmetric matrix $P\in \R^{n\times n}$ such that 
 $$
 \ker(D+D^T)\subset \ker(PB-C^T).
 $$
 Therefore we can not apply the result in \cite{abc} but can use our result to deduce the existence of solutions for the associated dynamical system. Indeed, we can see that $(A, B,B,D,\bar{K})$ satisfies all assumptions of Theorem 1 where $\bar{K}(t,x)=[f_1(t)-\varepsilon x_1 ,+\infty)\times [f_2(t)-\varepsilon x_2,+\infty)$.
\begin{remark}
This application also suggests an idea to consider the time-varying Lur'e dynamical system when  $(A, B, C, D, K)$ does not satisfy Assumptions $(A_1)-(A_5)$ by modifying the matrix $C$ and reduce  the time-varying system into the state-dependent one.
\end{remark}

\section{Conclusions}
The paper studies the well-posedness and asymptotic behaviour  for a class of  Lur'e dynamical systems where the set-valued feedback depends not only on the time but also on the state. Let us emphasis that the obtained  solutions are strong, comparing with the weak solutions acquired in \cite{abc}. The main tool is a new implicit discretization scheme, which is an advantage for implementation in numerical simulations. Some conditions are given to obtain the exponential attractivity of the solutions. 
\label{sectionc}

\end{document}